\newtheorem{theorem}{Theorem}[section]
\newtheorem{proposition}{Proposition}[section]
\newtheorem{example}{Example}[section]%
\newtheorem{remark}{Remark}[section]%
\newcommand{\mb}{\mathbf}
\newcommand\ie{\emph{i.e.}\xspace}
\newcommand\iid{\ensuremath{\mathit{i.i.d.}}\xspace }
\newcommand\iidsim{\ensuremath{\overset{\mathit{i.i.d.}}{\sim}\xspace }}
\newcommand\eg{\emph{e.g. }\xspace}
\newcommand{\wrt}{\emph{w.r.t.}\xspace}
\newcommand{\rset}{\mathbb{R}}
\newcommand{\ind}{\mathbf{1}}
\newcommand{\un}{\ind}
\newcommand{\1}{\ind}
\newcommand{\point}{\,\cdot\,}
\newcommand{\eqd}{\overset{d}{=}}
\newcommand{\PP}[1][]{\ifthenelse{\equal{#1}{}}{\ensuremath{\mathbb{P}}}{\ensuremath{\mathbb{P}\left( #1 \right)}}} 
\newcommand{\EE}[1][]{\ifthenelse{\equal{#1}{}}{\ensuremath{\mathbb E}}{\ensuremath{{\mathbb E}\left[ #1 \right]}}}
\newcommand{\given}[1][]{\ifthenelse{\equal{#1}{}}{\ensuremath{\;\vert\;}}{\ensuremath{\;\left|\; #1 \right. }}}
\newcommand{\law}{\mathcal{L}}
\newcommand{\ud}{\,\mathrm{d}}
\newcommand{\wto}{\overset{\mathrm{w}}{\to}}
\DeclareMathOperator{\minimize}{\mathrm{minimize}}
\DeclareMathOperator{\argmin}{\mathrm{argmin}}
\newcommand{\sphere}{\mathbb{S}}
\newcommand{\ball}{\mathbb{B}}
\newcommand{\spherepos}{{\mathbb{S}_{+}}}
\newcommand{\cl}{\mathrm{cl}}
\newcommand{\sspace}{\mathcal{X}}
\newcommand{\class}{\mathcal{A}}
\newcommand{\vapdim}{\mathcal{V}}
\newcommand{\shatter}{\mathcal{S}}
\newcommand{\borel}{\mathcal{B}}
\newcommand{\dd}{\{1,\ldots,d\}}
\newcommand{\EVT}{\textsc{evt}\xspace}
\newcommand{\ERM}{\textsc{erm}\xspace}
\DeclareMathOperator{\polar}{Polar}
\newcommand{\eps}{\varepsilon}
\newcommand{\cone}{\mathcal{C}}
\newcommand{\Oh}{\operatorname{O}}
\newcommand{\vertiii}[1]{{\left\vert\kern-0.25ex\left\vert\kern-0.25ex\left\vert #1
    \right\vert\kern-0.25ex\right\vert\kern-0.25ex\right\vert}}
\newcommand{\mv}{\textsc{mv}\xspace}
\newcommand{\hatphi}{\widehat{\Phi}}
\newcommand{\HTalgo}{{\fontfamily{qcr}\small{LHTR}}\xspace}
\newcommand{\lhtr}{\HTalgo}
\newcommand{\risk}{\mathcal{R}}
\newcommand{\riskhat}{\widehat{\mathcal{R}}}
\newcommand{\rCV}{\riskhat_{\mathrm{CV}}}
\newcommand{\rcvEx}{\rCV}
\newcommand{\hatg}{\widehat{g}}
\newcommand{\bayesext}{f_{P_\infty}}
\newtheorem{lemma}{Lemma}[section]
\newtheorem{assumption}{Assumption}[section]
\begin{document}

\title
{Weak Signals and Heavy Tails: Learning Theory meets Extreme Value Analysis}



\author[a]{Stephan Clémençon}
\author[b]{Anne Sabourin}

\affil[a]{ LTCI, T\'el\'ecom Paris, Institut Polytechnique de Paris, Palaiseau, France}
\affil[b]{Universit\'e Paris Cit\'e, Universit\'e Paris Saclay, ENS Paris Saclay, CNRS, SSA, INSERM, Centre Borelli, F-75006, Paris, France}




\maketitle
\abstract{The masses of data now available have opened up the prospect of discovering weak signals using machine-learning algorithms, with a view to predictive or interpretation tasks. As this survey of recent results attempts to show, bringing multivariate extreme value theory and statistical learning theory together in a common, nonparametric and nonasymptotic framework makes it possible to design and analyze new methods for exploiting the scarce information located in distribution tails in these purposes. This article reviews recently proved theoretical tools for establishing guarantees for supervised or unsupervised algorithms learning from a fraction of extreme data. These are mainly exponential maximal deviation inequalities tailored to low-probability regions and concentration results for stochastic processes empirically describing the behavior of multivariate extreme observations, their dependence structure in particular. Under appropriate assumptions of regular variation, several illustrative applications in multivariate settings are then examined: classification, regression, anomaly detection, model selection via cross-validation. For these, generalization results are established inspired by the classical bounds in statistical learning theory. In the same spirit, it is also shown how to adapt the popular high-dimensional Lasso technique in the context of extreme values for the covariates with generalization guarantees.\\
\textbf{Keywords}: Machine-Learning, Extreme Value Theory, Statistical Learning Theory \\
\textbf{MSC classification}: 62G32, 62G15, 62H30, 68Q32, 68T05}

\section{Introduction}\label{sec1}



Traditionally, Extreme Value Theory (\EVT) has been grounded in an asymptotic framework, reflecting its probabilistic foundations. In the Peaks-Over-Threshold approach, thresholds tend to infinity, while block-maxima methods rely on increasing block sizes. From a modeling perspective, parametric assumptions have dominated multivariate \EVT, justified by the need to mitigate data scarcity through strong structural constraints. A brief introduction to \textsc{EVT}—with a focus on the multivariate setting—and key references are provided in Section~\ref{sec:RV-back}.

In stark contrast, statistical learning theory, which underpins predictive machine learning and artificial intelligence, operates within a non-asymptotic, non-parametric paradigm. This literature delivers universally valid error bounds for "learnable" algorithms, applicable to finite sample sizes and independent of the data distribution, provided complexity assumptions hold for the class of predictive functions. These bounds are typically derived using concentration of measure inequalities or combinatorial results from Vapnik--Chervonenkis (VC) theory. Further details are deferred to Section~\ref{sec:LT-back}.

\medskip

\noindent
{\bf Purpose \& contributions.}
The growing availability of large datasets enables the use of data-intensive machine learning algorithms in the context of extreme value analysis, bridging the gap between these two domains.
The purpose of this paper is to provide  a unified framework that integrates and extends a series of prior contributions by the authors and colleagues, 
demonstrating  that the  opposition between \EVT and statistical  Machine Learning is not insurmountable, in either theory or practice. 
The primary focus is on statistical learning theoretical results, 
with discussions of applications.  Our contribution is double. On the one hand we provide a unified  overview of several  earlier works from the authors  regarding nonasymptotic statistical learning guarantees for \EVT, with some machine learning applications. Through this review, we seek to demonstrate the feasibility and effectiveness of integrating \EVT with modern statistical learning techniques. A significant portion of the methods presented here has been implemented in the Python package \texttt{MLExtreme}.\footnote{\url{https://github.com/hi-paris/MLExtreme}, available on PyPI.}
On the other hand, we present novel results  concerning complexity regularization (penalized empirical loss minimization). These new results extend the stylized framework developed in earlier works to potentially high-dimensional settings and  further illustrates the wide applicability of these works.

\medskip

\noindent
{\bf Scope.} 
A central paradigm in the developments presented here is the formulation of methods based on Empirical Risk Minimization (ERM), a foundational concept in learning theory and artificial intelligence. The applications motivating this theory span diverse contexts in multivariate settings, in unsupervised and supervised settings,  including anomaly detection, generative models for natural language processing, and more traditional \EVT applications, such as delineating risk regions based on streamflow data. In supervised settings, the main focus is on prediction based on extreme values of the covariates. 

The algorithms and methods discussed here are supported by finite-sample error bounds, which typically scale as \(1/\sqrt{k}\), where \(k\) is the number of observations retained as `extreme' in the training step. These bounds are derived by decomposing the error into a bias term, arising from the finite-distance nature of the data, and a variance term, capturing deviations from the mean conditional on an excess. While the bias term is often excluded from the analysis, regular variation assumptions ensure its vanishing above sufficiently large thresholds.

This framework provides a precise characterization of \emph{weak signals} detectable by machine learning in large datasets. Unlike classical algorithms, which primarily capture statistical regularity near the center of the distribution, the approaches discussed here exploit the rarer, tail-located information. This objective introduces new trade-offs, governed by the assumption of multivariate regular variation. Statistical learning in tail regions necessitates accounting for additional sources of bias, with significant implications for both theory and practice to ensure the validity and effectiveness of the frequentist ERM principle in this context.

This review only briefly addresses the growing field of dimension reduction for multivariate extremes, due to space constraints and its divergence from the core focus: leveraging concentration and VC-type inequalities to derive guarantees for ERM algorithms.

\medskip

\noindent {\bf Related work.} As explained above, this article focuses on recent developments in statistical learning theory that enable the analysis of predictive performance in tail regions of high dimension. It should be noted, however, that in recent years, a great deal of work has been done to apply empirical machine learning approaches, deep learning techniques in particular, to the analysis of multivariate extreme values. Without claiming to be exhaustive, one can cite the following articles. The use of neural networks to model and simulate extreme events, addressing in some cases parameter adjustment and architecture selection, is the subject of the articles \cite{bhatia2021exgan}, \cite{hasan2022modeling}, \cite{10650414}, \cite{de2025kolmogorov}, \cite{richards2024neural}, \cite{de2025generative}, \cite{JMLR:v23:21-0663}, \cite{dahal2024junction}, \cite{lafon2023vae}, \cite{mackay2024deep} for instance, see also \cite{wessel2025comparison} for a recent benchmark. Regarding the topic of dimensionality reduction in tail regions, one can mention the contributions in \cite{goix2016sparse}, \cite{goix2017sparse}, \cite{meyer2019sparse}, \cite{gardes2018tail}, \cite{engelke2020sparse}, \cite{aghbalou2021tail}, \cite{butsch2024information}, \cite{gardes2025dimension}, \cite{girard2025extremeplsmissingdataweak}, as well as \cite{chiapino2016feature,chiapino2019identifying}, \cite{chiapino2019vizu}, which deal with various issues such as sparse support estimation, extreme quantile or conditional tail-index estimation, tail inverse regression or unsupervised feature structure detection. One may refer to \cite{janssen2019k}, \cite{meyer2024multivariate}, \cite{JMLR:v25:21-1367} concerning the clustering of multivariate extremes, to \cite{drees2019principal}, \cite{jiang2020principal}, \cite{clemenccon2023regular}, \cite{butsch2025estimation}, \cite{cooley2019decompositions},\cite{jiang2020principal} for latent variable analysis and related model selection issues, to \cite{engelke2020graphical}, \cite{engelke2021learning}, \cite{doi:10.1137/24M1678635}, \cite{boulin2025structuredlinearfactormodels}, \cite{constantinou+d:2017}, \cite{mhalla2020causal}, \cite{gnecco2021causal} for graphical models and causal inference. More generally, the predictive power of variants of popular machine-learning methods adapted to be effective in tail regions has been recently investigated in numerous papers, too many to cite here, see \eg  \cite{velthoen2021gradient}, \cite{gnecco2024extremal}, \cite{verma2025optimal}, \cite{deCarv22}, \cite{buritica2024progressionextrapolationprincipleregression}.

\medskip
 
\noindent
{\bf Outline.}
After presenting in Section~\ref{sec:background} some background with key concepts and references both in \EVT and in statistical learning theory, 
Sections~\ref{sec:unsupervised} and~\ref{sec:supervised} review  recent advancements that contribute to reconciling \EVT with statistical machine learning, 
in unsupervised and supervised frameworks.  
Some new material is presented in Section \ref{sec:regression} that highlights the close relationships between the traditional regular variation framework and the `Learning on extreme covariates' setting, thereby opening the way to numerous applications of the latter. Section~\ref{sec:exlasso_section} presents novel results. Specifically, we  develop a natural extension of the least squares methods discussed in Section~\ref{sec:regression} to a penalized problem in a high-dimensional context, specifically a variant of the Lasso. We demonstrate that some existing guarantees on the standard Lasso, which have become standard in the realm of linear models with sub-Gaussian or bounded noise, carry over to this extension under appropriate assumptions regarding the tail dependence structure between the covariate and the target. Finally, Section~\ref{sec:perspectives} gathers some concluding remarks.





\section{Background and Preliminaries}\label{sec:background}

  This section introduces notation and 
  minimal background in multivariate \EVT and  statistical learning theory.

\subsection{Notations} 
Here and throughout, the indicator function of any event $\mathcal{E}$, valued in $\{0,1\}$, 
is denoted by $\un_{\mathcal{E}}$, and when the considered event corresponds to some condition $\mathcal{C}(x)$ involving some variable $x$ being satisfied, we write $\un\{\mathcal{C}(x)\}$. The $d$-dimensional Euclidean space $\rset^d$ is endowed with
its Borel $\sigma$-field $\mathcal{B}(\rset^d)$ and is equipped with a
norm $\|\cdot\|$. With respect to it, by $\ball$ is meant the unit
open ball in $\rset^d$, by $\sphere$ the unit sphere of $\rset^d$ and
by $\spherepos$ its intersection with the positive orthant
$\rset_+^d$. Vectors in  $\rset^d$ are denoted in bold. 
 The left-continous inverse of any nondecreasing
c\`adl\`ag function $H:\rset\to \rset$ is denoted by
$H^{-1}$. For $Z$ a random object we sometimes denote the distribution
of $Z$ by $\law(Z)$, and by $\law(Z\given \mathcal{E})$ the conditional distribution of $Z$ given the event $\mathcal{E}$.
The notation $(Z_i)_{i\le n} \iidsim Z$  means that the $Z_i$'s are independent and identically distributed (iid)  copies of $Z$. 
Algebraic operations between vectors on $\rset^d$ are understood componentwise, unless otherwise stated,  and  if $\mb x, \mb z$ are in $\rset^d$, the condition $\mb x \le \mb z$ means $X_j\le x_j$ for
 all $j\le d$; while $\mb X\not\le \mb x$ is the negation of the previous
 condition.
If $A\subset \rset^d$ and $t\in\rset$, then $tA$ is the set $\{t\, {\bf x}, {\bf x}\in A \}$, $\cl(A)$ is the closure of $A$, and $\partial A$ is the boundary of $A$. Convergence in distribution of random elements $(Z_n, \, n\ge 1)$, to a nondegenerate limit $Z_\infty$  (\ie weak convergence) is denoted by $Z_n\wto Z_\infty$. 


\subsection{Multivariate Extremes and Regular Variation}\label{sec:RV-back}
Most of the material presented in this paper focuses on learning
problems in multivariate (and  possibly high dimensional) spaces, typically $\rset^d$ when
$d > 1$. We consider a random vector (r.v.)
$\mb X = (X_1, \ldots, X_d)$ valued in $\sspace\subset\rset^d$ with probability distribution $P$,   and $n\geq 1$ \iid\ replications of
it: $\mb X_i = ( X_{i,1}, \ldots,X_{i,d}), 1\le i\le n$. 
A traditional assumption in \EVT, is that after
a suitable marginal standardization to unit Pareto margins, the
 distribution of the standardized vector $\mb V$
(see~\eqref{eq:standardize-pareto} below) conditionally on  $\|\mb V\|>t$, 
converges to a certain limit as $t\to \infty$. Precisely, denoting by
$F$ the cumulative distribution of $\mb X$ and letting
$F_j(u) = \PP(X_j \le u)$ for $u\in \rset$, define
\begin{equation}
 \label{eq:standardize-pareto}
 \begin{aligned}
   v(\mb x)  = \Big(\frac{1}{1 - F_1(x_1)}, \ldots, \frac{1}{1 - F_d(x_d)} \Big) \text{ for } \mb x=(x_1, \ldots, x_d)\;~\text{ and } 
    \mb V = v(\mb X) .  
    \end{aligned}
 \end{equation}
 A key assumption is the existence of a limit Borel measure $\mu$ on
 $\rset_+^d\setminus\{0\}$, referred to as the exponent
 measure, 
 that is finite on sets bounded away from $0$ and such that
 \begin{equation}
   \label{eq:standard-rv}
   t\PP(\mb V \in tA  ) \xrightarrow[t\to \infty]{} \mu(A) \,, 
 \end{equation}
 for all set $A \in \borel(\rset^d)$ bounded away from $0$
 and such that $\mu(\partial A) = 0$. This is equivalent to vague
 convergence of the measures $\mu_t = t\PP(\mb V \in t\point)$ on the
 space $[0,\infty]^d\setminus\{0\}$~\citep{Resnick1987}, \citep{Resnick2007}
 and to $M_0$ convergence of the
 same collection of measures on $[0,\infty)^d\setminus\{0\}$ as later formalized
 in~\cite{hult2006regular} on
 a complete separable metric space. 
 An immediate consequence of~\eqref{eq:standard-rv} is that $\mu$ is homogeneous of order $-1$,
 $\mu(tA) = t^{-1} \mu(A)$ for $t>0$ and $A\in\borel(\rset^d)$. 
 Condition~\eqref{eq:standard-rv},  is a special case of \emph{regular variation} regarding the random vector $\mb V$: 
 a random vector $\mb Z$ is regularly varying  if there exists  a  real function $b(t)>0$ and a limit measure $\nu$, such that
 \begin{equation}
   \label{eq:nonstandard-rv}
   b(t)\PP(\mb Z \in tA  ) \xrightarrow[t\to \infty]{} \nu(A) \qquad (\nu(\delta A) =0 , 0\notin \cl( A ))
 \end{equation}
 where $b$ is a positive function such that  $b(tx)/b(t)\to x^{-\alpha}$ for all $x,t>0$. The exponent $\alpha$ is called the \emph{index of regular variation}.
 In the standard form~\eqref{eq:standard-rv} the normalizing function
 is $b(t) = t$ so that $\alpha=1$.  Thus
 condition~\eqref{eq:standard-rv} may seem overly stringent since it
 requires regular variation of $ \mb V$ in a standard form. However it is
 in fact weaker. 
 Indeed, assume that $\mb X$ satisfies only a nonstandard domain of
 attraction condition, namely that for multivariate sequences
 $\mb a_n = (a_n^{(1)}, \ldots, a_n^{(d)})$ with $a_n^{(j)}>0$ and
 $\mb b_n = (b_n^{(1)}, \ldots, b_n^{(d)})$ with $b_n^{(j)}\in\rset$, such
that $\PP(\mb X \le \mb b_n)\to 1$, the distribution
 $\law\big((\mb X - \mb b_n)/\mb a_n\given \mb X \not\le \mb b_n\big)$ converges to a
 nondegenerate limit.  Then $\mb X$ does not necessarily satisfy~\eqref{eq:nonstandard-rv}, however  $\mb V$, the standardized version of $\mb X$, automatically
 satisfies~\eqref{eq:standard-rv} (\cite{rootzen2006multivariate},
 Theorem 2.3 and \cite{Resnick1987}, Proposition 5.10).


 The exponent measure $\mu$ in the limit~\eqref{eq:standard-rv} may be
 viewed as the limit distribution of extremes, as
 $\law(t^{-1} \mb V\given \|\mb V\| \ge t)\wto c\mu(\point)_{|\ball^c}$ where we write
 $\ball^c = \rset_+^d\setminus\ball$, $\ball$ is the unit open ball in $\rset^d$ relative to some norm $\|\point\|$ on $\rset^d$ and $c = \mu(\ball^c)^{-1}$.  One
 characterization of $\mu$ relies
  on a transformation to polar
 coordinates: 
 for
 $\mb x\in [0,\infty)^d \setminus\{0\}$, set
 $\polar(\mb x) = (r(\mb x), \theta(\mb x))$ where $r(\mb x) = \|\mb x\|$ and
 $\theta(\mb x) = r(\mb x)^{-1} \mb x$ is a point on the positive orthant
 $\spherepos$ of the sphere, which we call the \emph{angle} (or direction) of
 $\mb x$. 
 Then the homogeneity
 property of $\mu$ implies that $\mu\circ \polar^{-1}$ is a product
 measure on $\rset_+^*\times \spherepos$, namely
 $\ud (\mu\circ \polar^{-1})(r,\mb w) = \frac{\ud r }{r^2} \otimes
 \ud \Phi (\mb w)$.  The angular component $\Phi$, usually called the
 \emph{angular measure}
 has finite mass and the above definition may be rephrased as follows:
 for all $t>0$ and Borel measurable $A\subset\sphere$, define the truncated cone with basis  $A$, 
 $
\cone_A = \{\mb x\in\rset_d: ~ r(\mb x)\ge 1, ~\theta(\mb x)\in A \}
 $. 
 The angular  measure of the  set $A$ is simply $\Phi(A) = \mu(\cone_A)$. By homogeneity, for all $t>0$, $\mu(t A) =
   t^{-1} \Phi(A)$. Finally, $\Phi$ is the limit distribution of the angle conditionally on  the norm being large,
 \begin{equation}
   \label{eq:angularMeasure}
   \law(\theta(\mb V)\given r(\mb V)>t)\wto c\; \Phi(\point), \qquad c = \Phi(\spherepos)^{-1} = \mu(\ball^c)^{-1}.
 \end{equation}
 Because the angular measure characterizes the exponent measure, a natural idea  for learning problems involving the limit distribution of extremes is to
 characterize optimal solutions in terms of $\Phi$ instead of $\mu$, and
 propose empirical solutions taking as input  extreme angles
 $\theta(\mb V_i)$'s such that $r(\mb V_i)$ is large, where $\mb V_i = v(\mb X_i)$ and $(\mb X_i)_{i\le n}\iidsim P$. This reduces the
 dimension of the sample space by one. This may seem little, but it should
 be  noticed that the removed radial dimension is the one along
 which the data points are likely to be the most spread out since the
 radial distribution behaves asymptotically as a power law, while the
 angular component is contained in the compact set $\spherepos$. Of course the marginal distributions are unkown, thus the Pareto transformation $v$ is also unkown but may typically be replaced with an empirical version. This
 line of thinking underpins the developments in the following sections concerning statistical learning for extreme data. 
 
\subsection{Statistical Learning Theory}\label{sec:LT-back}


Here we recall the fundamental concepts at the heart of the statistical explanation for the success of machine learning methods, and their ability to generalize well. The success of these predictive techniques can be illuminated by empirical process theory, quantification of the complexity of the function classes that index them, and concentration inequalities.
For an in depth introduction to statistical learning theory, refer \eg to \cite{lugosi2002pattern} or~\cite{bblStatLearnTheory}. For an excellent presentation of concentration inequalities, see \cite{BLM2013}. Essential monographs in statistical learning include \cite{Devroye96}, \cite{Vapnik00}, \cite{mohri2018foundations}, see also 
\cite{van1996weak} for empirical process theory, which intersects substantially with statistical learning. 
\smallskip

\noindent {\bf Empirical processes and Vapnik--Chervonenkis theory.}
Let $X, X_i, i\le n \iidsim P$ be a random element and \iid copies
valued in an arbitrary sample space  $\sspace$. Although in many applications $\sspace \subset \rset^d$, such a restriction is not required here and the theory covers potentially infinite-dimensional spaces, as it is the case in functional data analysis.  By
$P_n=n^{-1}\sum_{i=1}^n\delta_{\mb X_i}$ is meant the  empirical
distribution of the \iid sample $(\mb X_i, i\le n)$.
From a historical point of view, the framework developed by Vapnik and Chervonenkis provided a better understanding of predictive learning by studying fluctuations of  the empirical process $\{P_n(A):\; A\in \mathcal{A}\}$, where $\mathcal{A}$ is a class of (Borel measurable) subsets of $\sspace$. The \textsc{VC} shatter coefficient of class $\mathcal{A}$
\begin{equation}\label{eq:VC1}
  \shatter_{\class}(n)= \max_{(\mb x_1,\ldots, \mb x_n) \in \sspace^n} \left| \big\{  A \cap (\mb x_1, \ldots, \mb x_n) : A \in \class\big\}\right| 
\end{equation}
allows to obtain a distribution-free control of the (mean) uniform deviations of the empirical measure $P_n$, known as a \textsc{VC} inequality: with probability at least $1-\delta$,
 \begin{equation}\label{eq:vc-inProba}
   \sup_{A \in \class} |P - P_n|(A) \le  2\sqrt{ \frac{ 2 \log\big\{ 4 \shatter_{\class}(2n) /\delta\big\} }{n}}. 
 \end{equation}
  %
%

The combinatorial quantity $ \vapdim_{\class} = \sup\{ n\ge 1: \shatter_{\class}(n) = 2^n\}$ referred to as the \textsc{VC} dimension of $\class$ permits to bound $\log( \shatter_{\class}(n))$ when it is finite: by virtue of Sauer's lemma, we have $\shatter_{\class}(n)\leq (n+1)^{\vapdim_{\class}}$. 
  Many simple classes (\eg  half-spaces, hyperrectangles, ellipso\"ids in $\rset^d$, unions and intersections of such classes) have finite \textsc{VC} dimension, in particular classes of sets constructed by many popular classification algorithms (\eg  decision trees, neural nets, linear \textsc{SVM}). While the statistical literature makes greater use of metric entropies to quantify the complexity of function classes, the combinatorial approach can be likened to this, as explained in \eg  \cite{van2000asymptotic}.
%

\smallskip

\noindent {\bf Binary classification in the  \textsc{ERM} paradigm.}
The concepts briefly recalled above can be used to demonstrate the generalization ability of predictive rules learned by empirical risk minimization, in the case of binary classification in particular. A flagship problem in machine-learning, its study and algorithmic solutions serve as models for many other predictive learning problems, both supervised and unsupervised. Easy to formulate, it involves a random binary label $Y$ (the output), valued in $\{-1,+1\}$ say, as well as a r.v. $\mb X$ defined on the same probability space, taking its values in $\sspace$ 
and modelling some input information a priori useful to predict $Y$. The goal is to select a classifier $g:\sspace \to \{-1,\; +1\}$ in a class $\mathcal{G}$ with $0-1$
risk $R(g) = \PP(g(\mb X) \neq Y)$ nearly as small as the minimum risk over the ensemble of all classifiers, attained by the so called \emph{Bayes classifier} $g^*: \mb x\mapsto 2 \1\{\eta(\mb x) \ge 1/2 \} - 1$ where $\eta(\mb X)$
is the \emph{posterior probability}, $\eta(\mb x) = \PP(Y = 1 \given \mb X= \mb x)$. The joint distribution $P$ of the random pair $(\mb X,Y)$ being unknown, the selection in the supervised framework must be based on the observation of $n\geq 1$ training examples $(\mb X_1, Y_1), \ldots, (\mb X_n,Y_n)$, independent copies of $(\mb X,Y)$. The frequentist \ERM strategy, the main paradigm of machine-learning today, consists in trying to reproduce the available examples by minimizing a statistical version of the risk over $\mathcal{G}$, typically the counterpart of $R(g)$ obtained by replacing $P$ by the raw empirical distribution $P_n=(1/n)\sum_{i=1}^n \delta_{(\mb X_i,Y_i)}$ (or by a smoothed/convexified and/or additively penalized version of the latter)
\begin{equation}\label{emp_risk}
R_n(g) = \frac{1}{n}\sum_{i=1}^n \ind\{g(\mb X_i) \neq Y_i \} = P_n(A_g),
  \end{equation}
where $A_g= \{(\mb x,y)\in \sspace \times \{-1,+1\}:\; g(\mb x) \neq y \}$ for any $g\in \mathcal{G}$. The predictive performance of minimizers $g_n$ of~\eqref{emp_risk} over the class $\mathcal{G}$ is measured by the \textit{excess of risk} 
\begin{equation}\label{eq:risk_excess}
  R(g_n) - R(g^*)= P(A_{g_n})-P(A_{g^*}),
\end{equation}
the expected difference between the future prediction errors of $g_n$ and those of the optimum $g^*$ given the training data. Of course, no  usable analytic form exists for $g_n$ (a fortiori for $P(A_{g_n})$), because it is a function of the training examples which is the product of a complex optimization procedure (the error of which is neglected here). However~\eqref{eq:risk_excess} is classically bounded as follows:
 \begin{equation}\label{eq:classic_bound}
    R(g_n) - R(g^*) \le 2 \sup_{ g\in \cal G} \left| P(A_g) - P_n(A_g) \right| +\inf_{g\in \mathcal{G}}\{ R(g)-R(g^*)\}.
   \end{equation}
   While the second term on the right hand side of~\eqref{eq:classic_bound} measures the \textit{model bias} (and decreases as the class $\mathcal{G}$ gets larger), the first one (\textit{stochastic term}) can be controlled in expectation (or in probability) by means of inequality~\eqref{eq:VC1}. Under the assumption that the class of sets $\{A_g:\; g\in \mathcal{G}\}$ is of finite \textsc{VC} dimension (\ie  that $\mathcal{G}$ is a \textsc{VC} class of functions, see 2.6 in \cite{van1996weak}), the generalization capacity of classifiers learned using \textsc{ERM} can be assessed, the stochastic term being then of order $O_{\mathbb{P}}(1/\sqrt{n})$ up to a logarithmic factor. See  \cite{Devroye96} or \cite{Vapnik00} for a detailed presentation of statistical learning theory. The class $\mathcal{G}$ (\eg  of the hyperparameters of the learning algorithm), is chosen in order to nearly minimize the true risk or to approximately balance the two terms in~\eqref{eq:classic_bound}. This  is usually achieved  using data-driven methods for model selection, mainly cross-validation or additive penalization techniques in practice. These popular model selection procedures are analyzed in Sections \ref{subsec:CV} and \ref{sec:exlasso_section} in the context of `Learning on extremes'. 

Motivated by recent developments in the practice of machine-learning (\eg  nonlinear SVM, ensemble learning) in the last decades, alternative complexity assumptions (Rademacher averages, see \cite{CLV06} and the references therein) and additional results, related to tail bounds for maximal deviations in particular, have been recently elaborated to analyze machine-learning algorithms \citep{bblStatLearnTheory}.   Finally, the same type of tools can be used to guarantee the performance of the ERM principle for other supervised tasks, such as regression  \citep{LecMend} or ranking  \citep{CLV08}, and for unsupervised tasks as well, such as anomaly detection \citep{Scott2006} or clustering \citep{CLEMENCON201442}.

\section{Unsupervised Learning in the Tails}\label{sec:unsupervised}
We now show how to reconcile statistical learning and extreme value analysis, starting with the unsupervised framework. As explained below, this requires concentration results tailored to low-probability regions, involving variance in the bounds to account for data scarcity in the tails.

\subsection{Empirical Processes on Low-probability Regions}
\label{sec:lowProba_concentrationBounds}
One  main idea behind Peaks-Over-Threshold analysis in  \EVT is to retain the   $k$ largest order statistics ($k \ll n$) from a given sample to estimate tail distribution characteristics. In a multivariate setting, or more generally in a metric space equipped with scalar multiplication, data can still be ordered based on their norm or distance from the origin. This approach involves evaluating the empirical measure $P_n$ over a class of sets $\class = \{ tB : B \in \class_1 \}$, where $\class_1$ is any class of sets bounded away from the origin, and $t > 0$ is chosen such that the union of the class $\mathbb{A} = \cup_{A \in \class} A$ has a small probability $p = P(\mathbb{A}) = O(k/n)$. The classical empirical measure is then substituted with the tail empirical measure, defined as:
\[
\nu_k(A) = \frac{n}{k} P_n(A) = \frac{1}{k} \sum_{i=1}^n \un_A(\mb X_i),~~ A \in \class.
\]
Asymptotic results that have become standard in \EVT \citep[see \eg][]{mason1988strong,einmahl1992behavior,einmahl1992generalized,einmahl1997poisson}  indicate that under mild assumptions,  the tail empirical process $\sqrt{k}(\nu_k(A) - \nu(A))_{A \in\class}$ converges in some sense (weak convergence or strong approximation) to a nondegenerate process as $k\to\infty,k/n\to 0$.  It is therefore reasonable to anticipate concentration inequalities for the tail empirical measure, which can be expressed as:
$\textit{with probability $1- \delta$,  }
  \sup_{A \in \class} | \nu_k(A) - (n/k) P(A)| \le B_k(\delta), $
where $ B_k(\delta)$ is un upper bound resembling the VC bound~\eqref{eq:vc-inProba} v  with $n$ replaced with $k$.    Dividing both sides of the inequality  by $n/k$  and identifying $p$ and $k/n$, the desired result becomes: 
   \begin{equation*}
     \begin{gathered}
     \sup_{A \in \class} | P_n(A) -  P(A)|\le  O\left(  \sqrt{ \frac{ p \left\{\log(1/\delta) + \log(\shatter_\class(np) ) \right\}}{n}}\right)     .        
     \end{gathered}
   \end{equation*}
   The following  normalized VC-inequality (\cite{vapnik2015uniform,Anthony93}, see \cite{BBL05}, Section 5 for further discussions) comes close to this goal:  
    with probability $1 - \delta$,    
\begin{align*}
\sup_{A \in \class} \frac{P(A) - P_n(A) }{\sqrt{P(A)}}  ~~\le~~ 2 \sqrt{\frac{\log \shatter_{\class}(2n) +\log{\frac{4}{\delta}}}{n}}~,
\end{align*}
with a similar result regarding the supremum of $(P_n(A) - P(A) ) /\sqrt{P_n(A)}$. 
Notice that the upper bound in the above display  involves a logarithmic term $\log{\shatter_{\class}(2n)}$ depending on the total sample size, not the effective sample size $np$ as above. 

 The  VC-inequality stated below and proved in \cite{goix2015learning} achieves the goal stated above 
 and may be seen as  the cornerstone of several follow-up works at the intersection between \EVT and  statistical learning. 
If $\class$ is  a VC-class of sets  with VC-dimension $\vapdim_{\class}$, then  
 with probability at least $1-\delta$,
\begin{align}
\label{eq:concentration-colt}
\sup_{A \in \class} \left| P_n(A) - P(A)\right| ~~\le~~ C \bigg\{ \sqrt{p}\sqrt{\frac{\vapdim_{\class}}{n} \log(1/\delta) }+ \frac{1}{n} \log(1/\delta) \bigg\}~, 
\end{align}
where $C$ is a universal constant coming from chaining arguments. The fact that $C$ is not explicit  is arguably a weakness   in~\eqref{eq:concentration-colt}. A refined analysis in \cite{lhaut2021uniform} provides explicit constants, and several variants of the above results. Inspection of the constants and numerical experiments in the cited reference indicate that the best known bound seems to be, 
  \begin{align}
    \sup_{A \in \class} \left| P_n(A) - P(A)\right|
    &\le\sqrt{\frac{2p}{n}}\Big\{\sqrt{2\log(1/\delta)} +
      \sqrt{\log 2 + \vapdim_{\class}\log(2np+1)}  + \sqrt{2}/2\Big\} \nonumber\\
    & \qquad + \frac{2}{3n}\log(1/\delta)\,. \label{eq:best_lhaut}
  \end{align}
  The proofs of~\eqref{eq:concentration-colt} and~\eqref{eq:best_lhaut}  rely on  error decompositions involving the deviations of the $k${th}  order statistics from the theoretical $1-k/n$ quantiles, and a control of the deviations of an empirical risk, conditional  upon an excess above the latter quantiles. Classical arguments in statistical learning and empirical process theory such as symmetrization arguments
  (see \eg ~\cite{lugosi2002pattern}, \cite{bblStatLearnTheory}, \cite{BBL05}), combined with  concentration results leveraging the low variance of the Bernoulli variables $\un\{\mb X_i>t\}$ for large $t$ \citep[][Theorem 3.8]{McDiarmid98}, then lead to the  above results. 

  
  The above tail bounds are crucial for the nonasymptotic control of
  stochastic process fluctuations in multivariate \EVT, particularly
  for the empirical angular measure which is the focus of Section~\ref{sec:empiricalAngularMeasure} below.
  This control is essential for the
  statistical theory explaining the success of machine learning with
  extreme data. These bounds have also proven useful in other learning
  frameworks involving data scarcity, such as severely imbalanced
  classification, to provide guarantees for ERM algorithms minimizing
  a balanced risk \citep{aghbalou2023sharp}.

  As a concluding remark, while most applications of these principle have concerned the multivariate setting,  non-asymptotic deviation bounds  unrelated to VC Theory but based on specific concentration tools for tail order statistics,   have been instrumental  in univariate EVT settings for adaptive tail index estimation  \citep{boucheron2012concentration}, \citep{boucheron2015tail}, \citep{carpentier2015adaptive}, \citep{lederer2025adaptive} or bandit problems with tail sensitive rewards \citep{carpentier2014extreme}, \citep{achab2017max}.


  \subsection{Empirical Angular Measure}\label{sec:empiricalAngularMeasure}
  As recalled in Section~\ref{sec:background}, another  key
  characterization of multivariate tail dependence which fully leverages the homogeneity property of the limit measure $\mu$,   is the angular
  measure, which is traditionally defined as in~\eqref{eq:angularMeasure},
  as the limit angular distribution above high thresholds of
  \emph{marginally standardized} variables, with marginal standardization function $v:\rset^d\to[1,\infty)^d$ defined in~\eqref{eq:standardize-pareto}. 
In a realistic setting where the marginal distributions $F_j$ are unknown, an empirical rank transform is typically defined as 
  \begin{equation}
    \label{eq:ranktransform}
    \widehat v(\mb x)  = \Big(\frac{1}{1 - \widehat F_j(x_j)},~ j \in \dd \Big)\;~;\qquad 
    \widehat {\mb V }= \widehat v(\mb X), 
  \end{equation}
  where $\widehat F_j(x)  = (n+1)^{-1}\sum_{i\le n} \un\{X_{i,j}\le x\}, x\in\rset$, 
  and the empirical angular measure of a borel set $A\subset \spherepos$  is then
  $ \widehat \Phi(A) = k^{-1} \sum_{i\le n} \un\{ \widehat{\mb  V}_i \in (n/k)\,\cone_A\}$. 
  Working with angular regions complicates significantly the analysis of the error induced by marginal standardization,   compared with the rectangular regions involved in the analysis of  the standard tail dependence function. 
  Indeed, the marginal errors $\widehat F_j(x_j) - F_j(x_j)$ may not be analyzed separately from the deviations of the pseudo-empirical process involving the (unobserved) angles $\{\theta\circ v(\mb X_i): ~ i\le n\}$.  
Indeed the errors $\widehat F_j - F_j$ propagate in a nonlinear fashion onto the angular error of the rank transformed samples $\theta(\widehat{\mb V}_i) - \theta({\mb V}_i)$. The proof of asymptotic normality in the bivariate case  \citep{Einmahl2001}, \citep{Einmahl2009} relies heavily on  rewriting the empirical angular measure evaluated at $A\subset \sphere$ in terms of the empirical tail measure associated with pseudo-observations $V_i = v(\mb X_i)$, evaluated on a random set $\widehat \Gamma_A$ accounting from marginal randomness,   $\delta_{ \widehat V_i} (\cone_A) = \delta_{  V_i} (\widehat \Gamma_A)  $. The next step is to construct two deterministic framing sets $\Gamma_A^-$, $\Gamma_A^+$ such that $\Gamma_A^- \subset \widehat \Gamma_A \subset \Gamma_A^+$ with high probability. Due to nonlinearities, the expression for these framing sets is somewhat involved, whence the difficulty to extend the proof to the multivariate case.

 To our knowledge,  \cite{clemenccon2023concentration} is the first work establishing guarantees for the empirical angular measure going beyond consistency in arbitrary  dimension. These guarantees take the form of concentration inequalities for the supremum deviations, over a class of sets $\mathcal{A}$ composed of measurable subsets of the positive orthant $\spherepos$ of the sphere in $\rset^d$, relative to to the $\ell_p$ norm on $\rset^d$, $p\in[1,\infty]$.   From a technical viewpoint, a major advantage of the nonasymptotic
 approach is that it permits to construct framing sets similarly as  above that are not required to be `tight'. More precisely  the approximation error arising from such a framing in
 the error decomposition can be of the same order of magnitude as the
 other deviation  terms (\ie, with a leading term of order
 $\Oh(1/\sqrt{k})$), instead of being negligible compared to them,  as it is required in the asymptotic analysis of earlier works mentioned above.
 The class of framing sets considered  in \cite{clemenccon2023concentration} thus takes the (comparatively) simple form, 
 $$
 \Gamma = \Gamma^+(r,h)\cup \Gamma^-(r,h), \;
 \Gamma^{\sigma}(A) =
 \big\{
	\mb x \in [0, \infty)^d : \
	\|\mb x\|_p \ge \frac{1}{r}, \;
	\theta(\mb x) \in A^{\sigma}(h \|\mb x\|_p)
	\big\}, 
 $$
 where $\sigma\in\{+,-\}$, the numbers  $r> 1$ and $h>0$ are
 tolerance parameters which have explicit expressions,
 and $A^-(\eps), A^+(\eps)$ are respectively an inner and outer envelope of an angular set $A$.  Without going into details, framing sets are trumpet-shaped sets, with a gap between the target set and its framing sets increasing with the distance from the origin. This reflects the propagation of uncertainty in the empirical distribution functions $\widehat{F}_j(x_j)$ through the nonlinear transformations $1/\{1 - \widehat{F}_j(x_j)\}$. An illustration is provided in Figure~\ref{fig:framing-sets} 
 \begin{figure}[hbtp]
 \centering
 \includegraphics[width=0.2\linewidth]{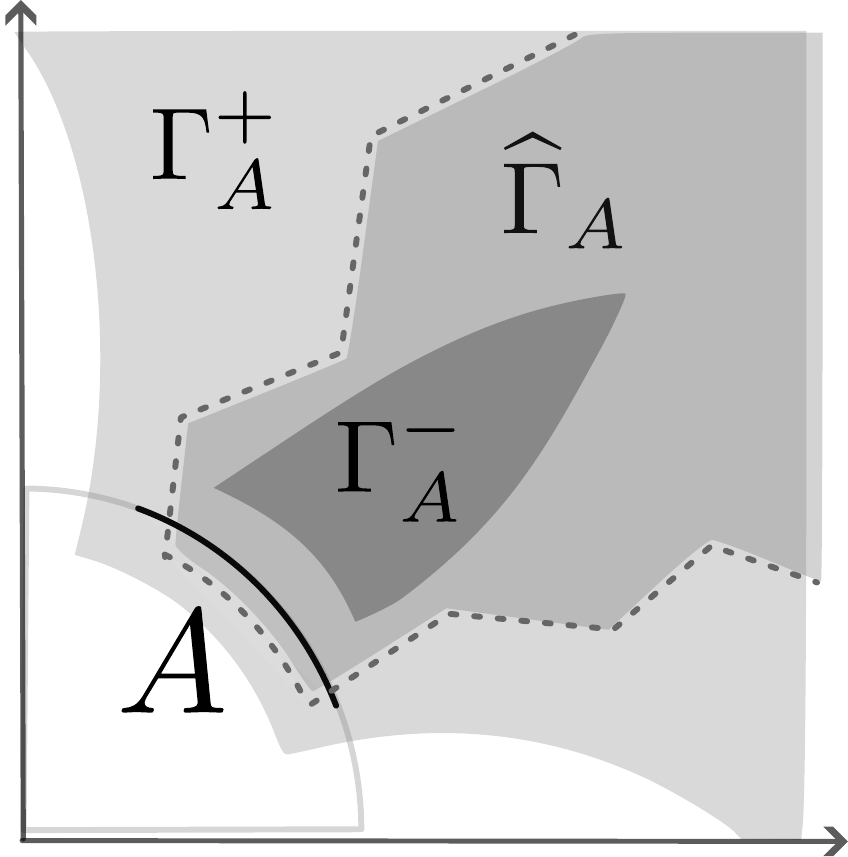}
 \caption{\label{fig:framing-sets} Bivariate illustration of the sets $A\subset\spherepos$, $\widehat{\Gamma}_A$, 
 and framing sets $\Gamma_A^- \subset \widehat{\Gamma}_A\subset \Gamma_A^+$ involved  in the theoretical analysis of the empirical angular measure.}
 \end{figure}
 
 Apart from (i) measurability and regularity conditions on the angular class $\mathcal{A}$, the main restrictions are that  (ii) the sets in the considered class are bounded away from the $2^d-1$ subfaces of $\spherepos$, and that (iii)
 the class $\Gamma$  of framing sets has finite VC dimension. 
Having to restrict the analysis to regions bounded away from the axes is no surprise in multivariate \EVT as regions close to the axes  constitute a recurrent issue that have motivated various censoring approaches  \citep{ledford1996statistics}. 
 The following bound is then valid with probability at least $1-\delta$  \citep[Theorem~3.1 in][]{clemenccon2023concentration}, 
 \begin{equation}
   \label{eq:main_results_empiricalAngular}
   \sup_{A\in\mathcal{A}}| \widehat \Phi(A) - \Phi(A) |  \le
   \frac{C_1(\delta, d, \vapdim_{\Gamma}, k)}{\sqrt{k}} +    \frac{C_2(\delta, d, \vapdim_{\Gamma},  k)}{k} +
   \textrm{Bias}(k,n). 
 \end{equation} 
 Here, $\mathrm{Bias}(k,n)$ is a bias term discussed below,  $\vapdim_{\Gamma}$ is the dimension of the class of framing sets, and  $C_1(\delta, d, \vapdim_{\Gamma}, k), C_2(\delta, d, \vapdim_{\Gamma}, k)$ are arguably complicated expressions, that are however explicit. In particular,  
 $C_1, C_2$  depend only logarithmically on $k, 1/\delta$, and polynomially on $d, \vapdim_{\Gamma}$, so that the bound does not become vacuous as   $d, \vapdim_{\Gamma}$ become large, as long as  the extreme sample size $k$ remains larger. The bias term $\textrm{Bias}(k,n)$ writes as $\sup\{ | (n/k)\PP(V \in G )  - \mu(G) |, G \in \Gamma\}$ and reflects the nonasymptotic nature of the largest observations. 
 It can typically be controlled by making additional second order assumptions, or in specific models. \cite{clemenccon2023concentration} work out a Bias-vanishing example in a multivariate Cauchy model. 

  From a broader perspective,
 such concentration results have unblocked several bottlenecks in the
 statistical learning approach of multivariate extremes, 
 in particular for anomaly
 detection based on angular \mv-set estimation
 \citep{thomas2017anomaly}, as described in Section~\ref{sec:mvsets}.  Other applications to supervised learning problems such as classification or regression on  extreme  covariates are reviewed in Section~\ref{sec:supervised}.


 Finally, similar results for an empirical estimator of an alternative
 characterization of the tail dependence structure, the standard tail
 dependence function namely, have been proved in
 \cite{goix2016sparse}. They have been leveraged in several further
 works \cite{goix2016sparse}, \cite{goix2017sparse} motivated by
 moderate-to-high dimensional contexts in where the goal is to
 identify subsets of components of a multivariate random vector which
 are likely to be simultaneously extreme, assuming that some sparse
 patterns exist, \ie that such subgroups are not too numerous and that
 their size is moderate. The latter sparsity assumption serves as a
 basis for a series of follow-up works with refined hidden regular
 variation assumptions \citep{simpson2020determining} or in a weakly
 sparse context
 \citep{chiapino2016feature}, \citep{chiapino2019identifying}. Concrete use
 cases in hydrology and aviation can be found respectively in
 \cite{chiapino2016feature} and \cite{chiapino2019vizu}. Other notable
 recent advances in unsupervised dimension reduction reduction for
 extremes with nonasymptotic guarantees include graphical Lasso
 approaches for learning tail conditional indepedence graphs
 \citep{engelke2021learning}. A nonasymptotic analysis of \ERM with
 similar guarantees on the tail statistical error is central to recent
 advancements in Principal Component Analysis (PCA) for multivariate
 extremes \citep{cooley2019decompositions}, \citep{drees2019principal}, with
 extensions to functional data analysis
 \citep{clemenccon2023regular}. This topic is the focus of a dedicated
 chapter in an upcoming edited volume.\footnote{\textit{Handbook on
     Statistics of Extremes}, Chapter 11,  co-authored by Dan Cooley,
    Anne Sabourin and Troy Wixson}

 \subsection{Unsupervised Anomaly Detection in Multivariate Tails via  Angular Minimum-Volume Set Estimation}\label{sec:mvsets}

This section illustrates the value of the general results of Sections~\ref{sec:lowProba_concentrationBounds} and~\ref{sec:empiricalAngularMeasure} in the unsupervised context of anomaly detection. The material presented here  follows \cite{thomas2017anomaly} and \cite{clemenccon2023concentration}. Minimum volume sets (\mv-sets in short), extending univariate quantiles, are the smallest sample space subsets containing  at least $\alpha$ probability mass, at some  level $\alpha$ \citep{einmahl1992generalized}.  This  approach shares similarity with \eg \cite{cai2011estimation}, where estimation of low levels of the density function using multivariate \EVT is also considered in a somewhat different context, that is assuming joint regular variation with a single regular variation index as in~\eqref{eq:nonstandard-rv}. In the cited reference, consistency of the extreme level sets is established. Here  a different approach is taken by assuming regular variation of the standardized vector $V$ and working with preliminary standardized data. Importantly,  nonasymptotic upper bounds concerning the estimated level sets are obtained  in \citeauthor{thomas2017anomaly} and ~\citeauthor{clemenccon2023concentration}. The statistical analysis in \citeauthor{thomas2017anomaly} is limited to the ideal case where the
marginal distributions are known, while  the work
of~\citeauthor{clemenccon2023concentration} on the empirical angular
measure, which encompasses rank transformation, provides the missing
piece to adress this limitation.

As detailed below, \mv-sets are strong candidates for regions of the samples space labelled as `normal' (\ie not abnormal) in an anomaly detection framework. With this in mind, \citeauthor{thomas2017anomaly} propose an anomaly detection algorithm aimed at detecting anomalies \emph{among extremes}, \ie within tail regions of the sample space of the kind  $\{\mb x \in \rset^d: \|\mb x\| > t \}$ for large values of $t$, under regular variation assumptions.
The envisioned setting here is moderate dimensional, meaning  that one may assume that the angular measure of extremes is concentrated on the interior of the positive orthant of the unit sphere.  
Higher dimensional settings are the focus of alternative algorithms based on dimension reduction, as discussed at the end of Section~\ref{sec:empiricalAngularMeasure}.

Minimum-volume sets (\(\mv\)-sets) are fundamental to semi-supervised
anomaly detection, where the goal is to define a `normal' region using
only majority-class data. In a Neyman–Pearson framework, a point is
flagged as abnormal at level \(1 - \alpha\) if it lies outside the
\(\mv\)-set of level \(\alpha\)~\citep{blanchard2010semi}. Under
absolute continuity of \(P\) with respect to \(\lambda\) and
boundedness of the density \(f = \ud P / \ud \lambda\), the set
\(\Omega^*_{\alpha} = \{\mb x \in \sspace: f(\mb x) \geq F_f^{-1}(1 -
\alpha)\}\) uniquely solves the minimum-volume problem
\(\min_\Omega \lambda(\Omega)\) subject to
\(\PP(\Omega) \geq \alpha\)~\citep{Polonik97}. Estimation involves
optimizing over a subclass \(\class\) of controlled complexity,
replacing \(P\) with its empirical counterpart. Nonasymptotic
guarantees and algorithmic approaches for empirical \(\mv\)-sets
\(\widehat{\Omega}_\alpha\) are provided
in~\cite{Scott2006}, \cite{einmahl1992generalized}.

Denoting by $\lambda$ the Lebesgue measure on
$\spherepos$, 
the optimization problem solved in \citeauthor{thomas2017anomaly} to
produce an empirical angular \mv-set $\widehat{\Omega}_{\alpha}$ on
the positive orhtant $\spherepos$ of the sphere is
  \begin{equation}\label{eq:mvsetsphere_emp}
  \min_{\Omega \in \class}\lambda(\Omega) \text{ subject to } \hatphi(\Omega)\geq \alpha-\psi(\delta) \, .
\end{equation}
where $\psi(\delta)$ is a tolerance parameter which magnitude should be of the same order as the deviations of the empirical measure $\hatphi$ described in Section~\ref{sec:empiricalAngularMeasure}.  
As for the choice of the $\ell_p$ norm involved in the definition of $\Phi$,  $p =\infty$ turns out to be a convenient choice from a computational perspective in \citeauthor{thomas2017anomaly}, although the theory  developed in \citeauthor{clemenccon2023concentration} (see Section~\ref{sec:empiricalAngularMeasure}) allows for an arbitrary choice of $p\in[1,\infty]$.  As summarized in \citeauthor{clemenccon2023concentration}, as soon as $\psi(\delta)$ is set to a value at least as large as the right-hand side of the error bound~\eqref{eq:main_results_empiricalAngular}, then on the favourable event $ \mathcal{E}$ of probability greater than $1-\delta$, over which  the error bound holds, it also holds that
\begin{equation}
  \label{eq:guarantee_mvset}
  \begin{aligned}
    \Phi_p(\widehat{\Omega}_{\alpha}) & ~\ge ~  {\hatphi}_p(\widehat{\Omega}_{\alpha}) - \psi \ge \alpha - 2\psi\,,\qquad \text{and } \\
    \lambda(\widehat{\Omega}_{\alpha}) & ~ \le ~  \inf \left\{ \lambda(A) : \ A \in \class, \, \Phi_p(A) \ge \alpha \right\}.
  \end{aligned}
\end{equation}

	
In the suggested  framework,  \emph{extreme} data are observed values $X$ such that the norm of their standardization  $r(\mb V) = \Vert \mb V \Vert$ is large. 
Among extreme observations with comparable   (large) radius,  \emph{anomalies}  are those which \emph{direction} $ \theta(\mb V) = \mb V  / \|\mb V \|$ is unusual, which is an appropriate model for anomalies in many applications. 

These results can be extended naturally, especially since, in anomaly detection (within multivariate tail regions), the objective is often to \emph{rank} suspicious observations based on their degree of abnormality rather than simply classifying as 'abnormal' \textit{vs} 'normal'.
One way of achieving this could be to combine the previous results with the approach developed in \cite{CT18}. One may also refer to \citeauthor{thomas2017anomaly}, where an ad-hoc scoring function for extremes is proposed, in the form of the product of a radial
component, $s_r(\mb x) = 1/r^2(\widehat{v}(\mb x)) $ and an angular component
$ \widehat{s}_\theta(\theta(\widehat{v}(\mb x))) $ derived from angular
minimum volume sets.

\section{Supervised Learning on Extreme-Valued Covariates}\label{sec:supervised}
We now turn to supervised learning problems, aimed at predicting the (discrete or  continuous) labels $Y$ assigned to extreme input observations $\mb X$ (covariates). We show how the multivariate regular variation hypothesis makes it possible to define a notion of limit risk reflecting the prediction error conditional on an extreme value of the covariates 
and to establish generalization bounds for the minimizers of an empirical version of the latter in classification and regression. Model selection via cross-validation is analyzed in this context. We also show here that this assumption allows us to design useful data representation and augmentation methods in the context of word embedding, the cornerstone of modern natural language processing.

\subsection{Binary Classification on the Tails of the Covariates}\label{sec:classif_theory}


The material presented here relies mainly on  
\cite{jalalzai2018binary} and~\cite{clemenccon2023concentration} -- the latter extending the guarantees obtained in the former, to the case where marginal distributions are unkown. 

Classification is the flagship of supervised learning problems. It is
also one of the  most natural frameworks in which  the uniform concentration
bounds  introduced as background in
Section~\ref{sec:background} reveal themselves fruitful for
proving generalization guarantees of classifiers obtained {via}
\ERM. 
Consider a
classification problem where a random pair $(X,Y)$ is observed, where
$X$ is an explanatory variable and $Y\in \{-1, +1\}$ is the label to
be predicted.  Suppose that the goal is to predict the labels
associated to large explanatory variables say $\|X\|\ge t$ for some
large threshold $t$. 
Two scenarios are possible: (i) one class becomes predominant as the threshold $t \to \infty$, making the problem almost trivial; (ii) the distribution of positive and negative classes stabilizes and tends toward a limit as $t \to \infty$. Our interest lies in case (ii).
Before formalizing the \ERM approach proposed in~\cite{jalalzai2018binary},  the following example,  drawn from~\cite{aghbalou2022cross}, illustrates a plausible situation corresponding to case (ii), in connection with the multivariate regular variation setting.
\begin{example}[Prediction in regularly varying random vectors]\label{ex:classif_rv}
  This example is  taken from~\cite{aghbalou2022cross}. 
  Consider the task of
  predicting the occurrence of an extreme event, namely predicting the
  (missing) value of a component \( Z_{d+1} \) in a random vector
  \( \mb Z = (Z_1, \ldots, Z_{d+1}) \in \mathbb{R}^{d+1} \), based on the
  partial observation \( (Z_1, \ldots, Z_d) \), given that the latter
  is large. An intermediate problem could be to predict whether
  \( Z_{d+1} \) is also large. Define:
$$
\mb X = (Z_1, \ldots, Z_d) \quad \text{and} \quad Y = \un\left\{ \frac{Z_{d+1}}{\|\mb Z\|} > c \right\},
$$
where \(\|\cdot\|\) is the \(\ell^p\) norm for some \( p \in [1, \infty) \) and \( c \in (0,1) \) depends on the task. For instance, \( c =\{(1/(d+1)\}^{1/p} \) if the target event is \( Z_{d+1} > 0 \) and \(|Z_{d+1}|^p\) is at least as large as the average value of \(|Z_j|^p\) for \( j \leq d+1 \).
\cite{aghbalou2022cross}  prove (Appendix A.2 of the reference) that the pair \((X, Y)\) satisfies the requirements of~\cite{jalalzai2018binary}'s setting if \( Z \) is a heavy-tailed random vector with a regularly varying density, an assumption commonly used in \EVT~\citep{de1987regular}, \citep{cai2011estimation}.  
\end{example}
Classification by ERM involves selecting a classifier \( g_n \) from a class \( \mathcal{G} \) to minimize an empirical risk. However, focusing on errors above a threshold \( t \) presents challenges: classical ERM may not perform well in the tails due to negligible training error influence, and restricting the training set to tail regions may result in insufficient data for generalization.
The primary goal of~\cite{jalalzai2018binary} is to minimize the conditional error probability for excesses above a radial threshold as \( t \to \infty \):
 \begin{equation}\label{eq:extreme-risk}
R_t(g) := \PP\{Y \neq g(\mb X) \mid \|\mb X\| > t\},
 \end{equation}
where \( P_t \) is the conditional distribution of \( (\mb X, Y) \) given \( \|\mb X\| > t \). The risk at infinity is defined as:
 \begin{equation}
   \label{eq:riskInfinity}
R_\infty(g) = \limsup_{t \to \infty} R_t(g).
 \end{equation}
 The Bayes classifier \( g^* \) relative to $P$  minimizes \( R_\infty \), but there is no guarantee that the ERM classifier \( g_n \) performs well in the tail, especially if \( \mathcal{G} \) is parametric, due to negligible tail errors compared to bulk errors.
 To address data scarcity in the tails, it is  assumed that the class distributions \(\PP(\mb X \in \cdot \mid Y = \sigma)\), \(\sigma \in \{-1, +1\}\), are regularly varying. Additionally, the ratio \(\PP(Y = +1 \mid \|\mb X\| > t) / \PP(Y = -1 \mid \|\mb X\| > t)\) must converge to a finite, nonzero limit to ensure the problem is neither trivial nor insoluble. This implies the indices of regular variation are equal, \(\alpha_+ = \alpha_-\). The tail index is set to 1 for simplicity and the normalizing functions are set to  \(b_+(t) = b_-(t) = t\), as if the explanatory variable were marginally standardized, see~\eqref{eq:standard-rv}; however, inspection of the proof of Theorem 1 in the cited reference shows that the choice of normalizing functions \(b_+(t)\) and \(b_-(t)\) does not affect the results as long as \(b_+(t) / b_-(t) \to \ell \in (0, \infty)\). This leads to the assumption that for all \(\sigma \in \{-, +\}\), the conditional distribution of \(\mb X\) given \(Y = \sigma\) is regularly varying with limit measure \(\mu_{\sigma}\) and angular measure \(\Phi_{\sigma}\), 
\begin{equation}
  \label{eq:simplifiedRV_jalalzai}
t \PP(t^{-1} \mb X \in A \mid Y = \sigma) \xrightarrow[t \to \infty]{} \mu_{\sigma}(A), \quad \sigma \in \{-, +\},  
\end{equation}
for measurable \(A \subset [0, \infty)^d \setminus \{0\}\) with \(0 \notin \partial A\) and \(\mu(\partial A) \neq 0\).
 A limiting pair \((\mb X_\infty, Y_\infty)\) is defined by the distribution
\[
\PP(Y_\infty = 1) = p_\infty, \quad \PP(\mb X_\infty \in A \mid Y_\infty = y) = \frac{\mu_{\text{sign}(y)}(A)}{\Phi_{\text{sign}(y)}(\spherepos)}.
\]
The regression function \(\eta_\infty(\mb x) = \PP(Y_\infty = 1 \mid  \mb X_\infty =\mb  x)\) depends only on the angle \(\theta( \mb x)\). 
Under the aforementioned assumptions, 
it turns out~\citep[Theorem 1 in ][]{jalalzai2018binary} that the  Bayes classifier \(g_{P_\infty}^*\) relative to the (standard) risk for the limit pair, that is, the minimizer of  $g\mapsto \PP\{g(\mb X_\infty)\neq Y_\infty\}$ also  minimizes the limit risk $R_\infty(g)$ defined in~\eqref{eq:riskInfinity}, 
\[
\inf_{g \text{ measurable}} R_\infty(g) = R_\infty(g_{P_\infty}^*) = \EE\Big[\min\big\{\eta_\infty(\Theta_\infty), 1 - \eta_\infty(\Theta_\infty) \big\} \Big], 
\]
where $\Theta_\infty = \theta(\mb X_\infty)$ is the limit  angular variable. 
An immediate consequence is that the  optimal classifier for \(R_\infty\) depends solely on the angle \(\theta(x)\) of the explanatory variable, suggesting an ERM strategy focused on angular classifiers. The  straightforward approach  subsequently analyzed is to  minimize, over a class \(\mathcal{G}\) of predictors $g(x)$ depending solely on the  angle $\theta(\mb x)$ with finite VC-dimension $\vapdim$,  an empirical risk \(\widehat{R}_k(g) = k^{-1}\sum_{i\le k}\un\{Y_{(i)} \neq  g(\mb X_{(i)})\}\)
where $[(1),\ldots, (n)]$ is the permutation associated with the (nonincreasing) order statistics of the norms $\|\mb X_i\|$, namely $\|\mb X_{(1)}\|\ge \dotsb\ge \|\mb X_{(n)}\|$. In~\cite{jalalzai2018binary} (Theorem 2),   an upper bound is derived on the excess risk above finite levels  $R_{t(n,k)}(\widehat g) - R_{t(n,k)}^*$, where $\widehat g$ is the minimizer of $\widehat{R}_k$, $t(n,k)$ is the $1-k/n$ quantile of  $r(\mb X)$  and $R_{t(n,k)}^*$ is the infimum of the risk over the class  \(\mathcal{G}\).   Under the assumptions listed above the upper bound is of order $ \sqrt{\vapdim}
( \sqrt{\log(1/\delta) /k} + \log(1/\delta)/k )$, up to a bias term  $\inf_{g\in \mathcal{G}_{\sphere}}R_{t}(g)-R^*_{t}$ reflecting that the optimal classifier at level $t$ may not belong to the model. 

We now turn to a more realistic setting where the marginal
distributions of the covariate may not all be on the same scale, and
in particular, may not share a common tail index, so that  the covariate $\mb X$ may not
satisfy~\eqref{eq:nonstandard-rv}. A natural idea is then to use the
transformed variable $\mb V= v(\mb X)$, and assume that~\eqref{eq:simplifiedRV_jalalzai} holds only for the transformed pair $(\mb V,Y)$. In practice the algorithm would
take as input the rank-transformed variables
$\widehat{\mb V}_i = \widehat v(\mb X_i)$. The analysis conducted in~\cite{clemenccon2023concentration} of the rank
transformation \(\widehat{v}\) defined in~\eqref{eq:ranktransform}, precisely allows to control the deviations of the empirical risk in this setting. 
The key observation linking the empirical
estimation of the angular measure to the classification problem is
that evaluating the empirical risk of a classifier \(g\) on extreme
covariates involves counting the positive (resp. negative) instances
\( \{ \theta(\widehat{\mb  V}_i), Y_i = +1 \text{ resp. } -1 \}\) such that
\(\|\widehat{\mb  V}_i\| \ge \|\widehat{\mb  V}_{(k)}\| \), observed in positively
(resp. negatively) assigned regions
\(\sphere_{+1}(g) = \{ \mb s \in \sphere: g(\mb s) = +1\}\)
(resp. \(\sphere_{-1}(g) = \{\mb s \in \sphere: g(\mb s) = -1\}\)). In other
words, the empirical risk of \(g\) is fully characterized by the
empirical angular measures of the positive and negative classes.

Controlling the deviations of the empirical angular measures of each class, while accounting for the rank transformation, permits to  control the excess risk of a variant of the ERM strategy described above, where the input \(X\) is replaced with its rank-transformed version \(\widehat{V}\). The deviations of the empirical risk are then controlled by restricting the input space to regions sufficiently far from the axes. Introducing, for \(\tau > 0\), the class of subsets of the sphere
\[
\mathcal{A} = \{\sphere_{+1}(g) \cap \{\mb s: \min \mb s \ge \tau\}, g \in \mathcal{G}\} \cup \{\sphere_{-}(g) \cap \{\mb s: \min \mb s \ge \tau\}, g \in \mathcal{G}\},
\]
and making the same assumptions regarding the class \(\mathcal{A}\) as those summarized in Section~\ref{sec:empiricalAngularMeasure} before the statement of the bound~\eqref{eq:main_results_empiricalAngular}, they obtain
\[
\sup_{g \in \mathcal{G}} |\widehat{R}^{>\tau}(g) - R_\infty^\tau(g)| \le \frac{C_1(\delta/2, d, \vapdim_{\bar{\mathcal{A}}}, k)}{\sqrt{k}} + \frac{C_2(\delta/2, d, \vapdim_{\bar{\mathcal{A}}}, k)}{k} + \textrm{Bias\,II}(k, n),
\]
where \(\widehat{R}^{>\tau}\) and \(R_\infty^\tau\) are restrictions of the empirical risk and the asymptotic risk to inputs \(\mb x\) such that \(\min \theta(\widehat{v}(\mb x)) > \tau\) and \(\min \theta(v(\mb x)) > \tau\), respectively. The functions \(C_1\) and \(C_2\) are as in~\eqref{eq:main_results_empiricalAngular}, and \(\textrm{Bias\,II}( k, n)\) is a bias term of the same nature as in~\eqref{eq:main_results_empiricalAngular}, with class distributions \(\PP(\mb V \in \cdot, Y = \sigma 1) \) and their associated limit measures \(\mu_\sigma\) replacing the distribution \( \PP(\mb V \in \cdot) \) of the covariate and its limit angular measure \(\mu\).

\subsection{Heavy-Tailed Representations and Classification in NLP}\label{sec:NLP}
This section summarizes the findings of~\cite{jalalzai2020heavy}, which adapts the classification framework from Section~\ref{sec:classif_theory} to a natural language processing task. While embeddings such as  BERT (which was state-of-the-art at the time of this work's publication) excel at capturing statistical regularities in text data, they overlook the heavy-tailed nature of word frequency distributions~\citep{mandelbrot1953informational}, \citep{extrem_3}, \citep{extrem_2}. The proposed method, \lhtr (Learning a Heavy Tailed Representation), transforms BERT outputs $\mb X$ into representations $\mb Z = \varphi(\mb X) \in \rset^{d^\prime}$ that satisfy \EVT assumptions (namely, regular variation), even when the original embeddings do not. This transformation is learned adversarially~\citep{Goodfellow-et-al-2016} to ensure that:
\begin{itemize}
\item The marginal distribution $q(\mb z)$ of $\mb Z$ approximates a heavy-tailed target distribution $p$ with $b(t) = t$ (see~\eqref{eq:nonstandard-rv}).
\item The classification loss of a downstream multilayer perceptron trained on $\mb Z$ is minimized.
\end{itemize}
The extreme region is defined as $\{\mb z: \|\mb z\| > t\}$, where $t$ is an empirical quantile of the encoded data norms. \lhtr trains two classifiers: $g^{\text{ext}}$ for the extreme region and $g^{\text{bulk}}$ for the bulk, combining them as:
$$
g(\mb z) = g^{\text{ext}}(\mb z)\ind\{\|\mb z\| > t\} + g^{\text{bulk}}(\mb z)\ind\{\|\mb z\| \leq t\}.
$$
The method minimizes a weighted risk:
\begin{multline*}
  R(\varphi, g^{\text{ext}}, g^{\text{bulk}}) = \rho_1 \PP(Y \neq g^{\text{ext}}(\mb Z), \|\mb Z\| \geq t) + \rho_2 \PP(Y \neq g^{\text{bulk}}(\mb Z), \|\mb Z\| < t) \\
  + \rho_3 \mathfrak{D}(q(\mb z), p(\mb z)),
  \end{multline*}
where $\mathfrak{D}$ is the Jensen--Shannon distance, approximated adversarially.

Experiments on the \textit{Amazon}~\citep{mcauley2013hidden} and \textit{Yelp}~\citep{yelp_1}, \citep{yelp_2} datasets demonstrate that \lhtr outperforms baseline models, particularly in tail regions. Additionally,  a label-preserving data augmentation algorithm leveraging \lhtr is proposed. Synthetic sequences are generated along the curve $\{\varphi^{-1}[\lambda \varphi(\mb x)] : \lambda > 1\}$ for a given input $\mb x$, preserving the original label and improving classification performance. The target distribution is a multivariate logistic distribution, $F(\mb x) = \exp\big\{ - (\sum_{j=1}^d {x_j}^{1/\delta})^{\delta} \big\}$ with $\delta\in (0,1]$. Evaluations show consistent improvements in classification error, F1 score, and qualitative metrics for data augmentation tasks.

 \subsection{Cross-validation Guarantees}\label{subsec:CV}

 Cross-validation (CV) is a widely used tool in statistical learning for estimating the generalization risk of algorithms and selecting hyper-parameters or models~\citep{arlot2010survey}, \citep{wager2020cross}, \citep{bates2021cross}. While CV's performance has been analyzed in various settings, including density estimation~\citep{arlot2008VFpen}, \citep{arlot2016VFchoice} and least-squares regression~\citep{homrighausen2013lasso}, \citep{xu2020rademacher}, there is a lack of theoretical guarantees for CV when applied to \EVT-based algorithms.

 The work by~\cite{aghbalou2022cross} aims to address the gap in the literature by examining learning algorithms based on \ERM  in low-probability regions of the covariate space, as explored in~\cite{jalalzai2018binary}, \cite{jalalzai2020heavy}, \cite{clemenccon2023concentration} for classification tasks described in Sections~\ref{sec:classif_theory} and~\ref{sec:NLP},   and in~\cite{huet2023regression} for continuous regression settings (Sections~\ref{sec:regression},~\ref{sec:exlasso_section} below). A broader class of problems where cross-validation comes as a natural approach include unseupervised contexts, \eg for goodness-of-fit evaluation or model selction in parametric modelic of tail dependence~\citep{Einmahl2012}, \citep{einmahl2018continuous}, \citep{einmahl2016m}, \citep{kiriliouk2019peaks} and model selection. For dimension reduction in multivariate extremes, CV could be used for selecting hyper-parameters and sparsity levels  in~\cite{goix2016sparse}, \cite{goix2017sparse} and PCA-based methods~\citep{cooley2019decompositions}, \citep{jiang2020principal}, \citep{drees2019principal}. Clustering approaches~\citep{janssen2019k}, \citep{chiapino2019vizu}, \citep{jalalzai21feature} could also benefit from effective model selection techniques. In supervised settings, extreme quantile regression is well-established in \EVT, with notable contributions from~\cite{daouia2013kernel}, \cite{chernozhukov2017extremal}, \cite{chavez2014extreme}, and CV could be used for kernel bandwidth selection. Recent alternatives like Gradient Boosting~\citep{velthoen2021gradient}, Regression Trees~\citep{farkas2021cyber}, and Extremal Random Forests~ \citep{gnecco2024extremal} 

Within the vast landscape of potential applications described above,~\cite{aghbalou2022cross} focuses on the \ERM classification framework developed in~\cite{jalalzai2018binary} for moderate-to-high dimensional contexts. They consider a constrained form of the Lasso:
\begin{equation}
  \label{eq:constrained_lasso_cv}
\minimize_{\boldsymbol{\beta} \in \mathbb{R}^d} \sum_{i \leq k} c(g_{\boldsymbol{\beta}}(\mb X_{(i)}), Y_{(i)}) \quad \text{subject to} \quad \|\boldsymbol{\beta}\|_1 \leq u,  
\end{equation}
where \(u > 0\) is a hyper-parameter to be selected by CV, \(g_{\boldsymbol{\beta}}(\mb x) =  \boldsymbol{\beta}^\top \theta(\mb x)\) aligns with the theoretical results in~\cite{jalalzai2018binary} that classification on large covariates should depend only on their angle, and \(c\) is the logistic cost, \(c(\hat{y}, y) = \log\{1 + \exp(\hat{y} y) \}\), a convex substitute for the 0-1 loss.
Their analysis is designed to handle more general settings of risk minimization on a low-probability region of the sample space, specifically for \ERM machine learning algorithms minimizing empirical versions of the risk:
\[
\risk(g, \mb Z) = \EE[c(g, \mb Z) | \|\mb Z\| > t_p],
\]
where \(\mb Z\) is a random observation in a sample space \(\mathcal{Z}\), \(\|\cdot\|\) is a semi-norm on \(\mathcal{Z}\), and \(t_p\) is the \(1-p\) quantile of \(\|\mb Z\|\). Thus, \(\mathbb{A} = \{\mb z \in \mathcal{Z}: \|\mb z\| > t_p\}\) is an unknown region of the sample space with low probability \(p = \PP( \mb Z \in \mathbb{A} ) \ll 1\), aligning with the `rare events' setting developed in Section~\ref{sec:lowProba_concentrationBounds}.

Given training data \( \mb Z_1, \ldots, \mb Z_n\) and a training subsample \((\mb Z_i, i \in \mathcal{S})\) indexed by \(\mathcal{S} \subset \{1, \ldots, n\}\), an empirical version of the risk \(\risk\) is:
\[
\riskhat(g, \mathcal{S}) = \frac{1}{p n_{\mathcal{S}}} \sum_{i \in \mathcal{S}} c(g, \mb Z_i) \ind\{\|\mb Z_i\| > \|\mb Z_{(\lfloor p n \rfloor)}\|\}.
\]
The focus of~\cite{aghbalou2022cross} is on learning rules \(\Psi\) that take \(\mathcal{S}\) as input and return the \ERM solution \(\Psi(\mathcal{S}) = \hat{g}(\mathcal{S}) = \argmin_{g \in \mathcal{G}} \riskhat(g, \mathcal{S})\). The main quantity of interest is the generalization risk \(\risk(\hat{g}_n)\) of the \ERM predictor \(\hat{g}_n = \Psi(\{1, \ldots, n\})\) trained on the full dataset. A CV estimator of this quantity is defined as an average of hold-out estimates:
\[
\rcvEx(\Psi, V_{1:K}) = \frac{1}{K} \sum_{j=1}^{K} \riskhat(\Psi(T_j), V_j),
\]
where \((V_j, j \leq K)\) are validation sets and \(T_j = \{1, \ldots, n\} \setminus V_j\) are training sets.

Besides a balance condition for the validation and training sets, satisfied in common CV schemes including leave-one-out, leave-p-out, and K-fold, their standard working assumptions are: (i) the loss class \(\{\mb z \mapsto c(g, \mb z), g \in \mathcal{G}\}\) associated with the predictor class \(\mathcal{G}\) is a VC subgraph class ~\citep[see \eg][Section 2.6]{van1996weak} and (ii) the cost function is bounded. It should be noted that the boundedness assumption precludes application to extreme quantile regression, leaving the extension to unbounded losses with appropriate tail control as an open question for further work.

The main results take the form on upper  bounds on the error $|\rcvEx(\Psi, V_{1:K})  - \risk(\hatg_n)|$, valid with high probability.
First an \emph{exponential} error bound is derived\footnote{The denomination `exponential'   comes from the fact that the probability upper bound results from an inversion of a  tail bound of exponential form,
  $\PP\big( \text{error} > s \big)
  \le \exp\{ - f(s) \}$ where
  $f(t)\ge \min(A_1 t, A_2 t^2)  $ for some context-dependent factors $A_1,A_2$.} (Theorem 3.1 in \cite{aghbalou2022cross}),
 \begin{align}
   \big|\rcvEx(\Psi,V_{1:K})-
   \risk\big(\hatg_n\big)\big|  \leq  E_{\text{CV}}(n_{T},n_{V},p) 
                                                     + \frac{20}{3 n p} \log(1/\delta) 
   +  20\sqrt{\frac{2}{np} \log(1/\delta)},\label{eq:expo_bound_cv}
\end{align}
valid with probability $1-15\delta$, where 
 $ E_{\text{CV}}(n_{T},n_{V},p)=M\sqrt{\mathcal{V}_\mathcal{G}}(1/\sqrt{n_{V}p}+4/\sqrt{n_{T}p})+5/(n_{T}p)$, 
where $M >0$ is a universal constant, and $n_V$ and $n_T$ are the respective size of the validation and training sets.  For $K$-fold CV schemes, both $n_V$ and $n_T$ are proportional to $n$ and the above bound ensures in particular consistency as the sample size grows while the number of folds is fixed. However for leave-one-out schemes and variants, the size $n_V$ of the validation set is small, and error bounds invoving a term $1/(n_V p)$ are inappropriate in particular when $p$ is small. Different techniques of proof permit anyway to obtain a  \emph{polynomial} error bound\footnote{`Polynomial' means it results from an inversion of a  tail bound of (inverse) polynomial  form, 
  $\PP\big( \text{error} > s \ \le  B_1 + B_2/t$ up to negligible terms,   for some context-dependent factors $B_1,B_2$.} involving the training sample size $n_T$ only,
\begin{align}
  \big|\rcvEx(\Psi,V_{1:K})-
  \risk\big(\hatg_n\big)\big| \leq 
  E'_{\text{CV}}(n_{T},\alpha)+
  \frac{1}{\delta\sqrt{n_{T}\alpha}}(5M\sqrt{\mathcal{V}_{\mathcal{G}}}+M_5),
  \label{eq:poly_bound_cv}
\end{align}
with probability $17\delta$,         where $M,M'>0$ are universal constants, $M$ is the same as
        in~\eqref{eq:expo_bound_cv} and
        $E'_{\text{CV}}(n_{T},\alpha)=
        9M\sqrt{\mathcal{V}_\mathcal{G}}/\sqrt{\alpha
            n_{T}}+9/(n_{T}\alpha)$.
        Both bounds~\eqref{eq:expo_bound_cv} and~\eqref{eq:poly_bound_cv} serve as sanity-check guarantees that do not prove the CV error outperforms the naive (biased) method of substituting the empirical training risk for the generalization risk. This limitation mirrors that in~\cite{cornec10}, \cite{cornec17}, established outside the \EVT setting. As discussed in~\cite{aghbalou2022cross}, moving beyond sanity-check guarantees without additional assumptions remains an open question in mathematical statistics.
        
        Returning to the constrained Lasso problem, a grid search over a range $U$  of plausible values for \(u\) necessitates a union-bound approach to control the deviations of the CV risk for the rules $\Psi_u$ associated with problem~\eqref{eq:constrained_lasso_cv} with constraint $\|\boldsymbol{\beta}\|_1\le u$, $u\in U$. This would render the polynomial bound~\eqref{eq:poly_bound_cv} vacuous. However, the exponential bound~\eqref{eq:expo_bound_cv} remains effective because multiplying \(\delta\) by the size \(|U|\) of the \(u\)-grid results in only an additional logarithmic factor, \(\log|U|\). Lemma 5.1 in~\cite{aghbalou2022cross}  states an upper bound valid with high probability $1- 15\delta$, 
        \begin{multline*}
\big|\rcvEx(\Psi_{\widehat u},V_{1:K})-
\risk\big(\hatg_n\big)\big| \le 
\max(U)\bigg[ 
     2E(n,K, p)
     + \frac{40}{3 n p} \log\left(|U|/\delta)\right) \\
            +  40\sqrt{\frac{2}{n p} \log\left(|U|/\delta\right)}\bigg],         
        \end{multline*}
where $\widehat u$  is the minimizer of the CV risks $\rcvEx(\Psi_u V_{1:K}), u \in U$, and $E(n,K, p)=  5M\sqrt{(d+1)K/(n p)}  +5K/\{ (K-1)n p \}$.  
\medskip

\noindent {\bf Discussion.} The work~\cite{aghbalou2022cross} is an initial theoretical attempt to provide guarantees for CV in  \EVT problems. As mentioned earlier, the condition of a bounded loss may be seen as restrictive, although it aligns well with the 'learning on extreme covariates' setting. This setting encompasses prediction problems in multivariate regularly varying random vectors, as seen in Example~\ref{ex:classif_rv}, or in the prediction setting developed in Proposition~\ref{prop:exampleRescale} below. The main topic not covered is extreme quantile regression, as discussed previously. In another direction, one could consider moving away from the ERM context and explore stable algorithms~\cite{kearns1999algorithmic}, \cite{bousquet2002stability}, \cite{kutin2012}, \cite{kumar2013near}, which encompass a wide range of algorithms, including stochastic gradient descent strategies and regularized risk minimization approaches.

\subsection{Regression on Covariate Tails}\label{sec:regression}

Having established a robust classification framework for large covariates, this section extends the approach of~\cite{jalalzai2018binary} to regression. This transition is nontrivial, as it requires adaptations to handle continuous outcomes, based on the material in~\cite{huet2023regression}, namely to the task of predicting through least square regression a continuous target $Y\in\rset$, based on a covariate
$\mb X\in\rset^d$, conditional to the occurrence of an extreme event relative to the covariate, $\| \mb X\|>t$, for  $t\gg 1$.  
The focus of~\cite{huet2023regression} is on the minimization of  conditional least-squares risk,
$R_t(f) = \EE(Y-f(\mb X)\,|\, \|\mb X\|>t )$, and its limit superior as $t\to \infty$,
$R_\infty(f) = \limsup_{t\to\infty}R_t(f) $, where $f$ is a prediction function chosen in an appropriate class $\mathcal{F}$ of predictors.
We state in this section the foundational assumptions that underpin the penalized extension developed in Section~\ref{sec:exlasso_section}.  
\begin{assumption}[Bounded target]\label{assum_1_huet}
  The target $Y$ is a.s. bounded, \ie $Y\in[-M,M]$ with probability $1$, for some $M>0$. 
\end{assumption}
Although Assumption~\ref{assum_1_huet} may seem restrictive in an \EVT context, it is important to note that, similar to classification settings, the `extreme' behavior considered here pertains to the covariate \(\mb X\), not the target \(Y\). Additionally,~\cite{huet2023regression} provides an illustrative example involving multivariate random vectors and a prediction task of one component based on the others, paralleling the classification illustration in Example~\ref{ex:classif_rv}. In this example, a scaling mechanism constructs an appropriate target \(Y\) that satisfies the boundedness assumption. Furthermore, in Proposition~\ref{prop:exampleRescale}, we present a new example that simplifies the former. Specifically, the boundedness assumption translates into a condition that the angular component of the target should be bounded away from the axes. This should not be surprising given the restrictions imposed regarding regions near the axes, as discussed in Sections~\ref{sec:empiricalAngularMeasure} and~\ref{sec:classif_theory}.
We now state an equivalent from Assumption~2 in~\cite{huet2023regression}, which proves to be more convenient for our purposes. 
\begin{assumption}[Regular variation w.r.t. the covariate]\label{assum_2_huet} 
  The function $t\mapsto \PP(\|\mb X\|>t)$ is regularly varying with index
  $\alpha>0$, \ie $\PP(\|\mb X\|>ts ) / \PP(\|\mb X\|>t)\to s^{-\alpha}$ for
  all $s>0$, as $t\to\infty$, and
  $$
  \mathcal{L}\big\{ (t^{-1} \mb X, Y)|~  \|\mb X\|>t \big\}
  \xrightarrow[t\to\infty]{} P_\infty 
  $$
  for some limit distribution $P_\infty$ on $\{(\mb x,y)\in\rset^{d+1}: \|\mb x\|> 1, y \in \rset \}$ 
\end{assumption}
Automatically, under Assumption~\ref{assum_2_huet}, the limit distribution $P_\infty$ is $\alpha$-homogeneous \wrt its first component,  $P_\infty(t A, B) = t^{-\alpha}P_\infty(A,B)$. Importantly, denoting by $(\mb X_\infty, Y_\infty)$ a random pair distributed according to $P_\infty$, the homogeneity of $P_\infty$ implies that $\|\mb X_\infty\| \perp\!\!\!\perp (\theta(\mb X_\infty), Y_\infty)$. A major consequence is that
the regression function $f_{P_\infty}$   for the limit pair $(\mb X_\infty,Y_\infty)$, defined by $f_{P_\infty}(\mb X_\infty) =  \EE(Y_\infty ~|~ \mb X_\infty)$ almost surely,  does not depend on the radial component $r(\mb X_\infty)$. In other words,   there exists a function  $h_\infty$ defind on $\sphere$ such that
$f_{P_\infty}(\mb x) = h_\infty(\theta(\mb x))$.
The next step is to establish optimality properties for $f_{P_\infty}$ regarding $R_\infty$, paralleling the ones in the classification setting. An additional condition is needed, regarding the convergence of the Bayes regression function defined almost surely by  $f^*(\mb X) = \EE(Y | \mb X)$, towards $f_{P_\infty}$.  
\begin{assumption}\label{hyp:cont_reg_func}
  The  regression function $\bayesext$ for the limit pair $(\mb X_\infty,Y_\infty)$  is continuous on $\rset^d\setminus\{0_{\mb{R}^d}\}$ and  as $t$ tends to infinity,
  \begin{equation*}
    \EE\Big( \,\big|\,f^*(\mb X) - f_{P_\infty}(\mb X)\,\big| ~\Big|~ \lVert \mb X\rVert \ge  t \,\Big) \xrightarrow[t\to\infty]{}0.
  \end{equation*}
\end{assumption}
Several concrete examples are provided where Assumption~\ref{hyp:cont_reg_func} is satisfied, including multiplicative and additive noise models, and an example involving regular variation (\wrt the covariate) of densities.

Under Assumptions~\ref{assum_1_huet},~\ref{assum_2_huet}, and~\ref{hyp:cont_reg_func},~\cite{huet2023regression} establish that \(f_{P_\infty} = h_\infty \circ \theta\) is indeed a minimizer of \(R_\infty\). This suggests an \ERM strategy paralleling the classification setup, namely searching for an angular predictor of the form \(f = h \circ \theta\). This involves choosing a predictor class of the form \(\mathcal{F} = \{h \circ \theta \,:\, h \in \mathcal{H}\}\) and minimizing the empirical tail risk:
\[
\min_{h \in \mathcal{H}} \frac{1}{k} \sum_{i \leq k} \big\{Y_{(i)} - h \circ \theta(\mb X_{(i)}) \big\}^2.
\]
Under standard measurability and VC complexity assumptions regarding the class \(\mathcal{H}\), for any $\delta\in (0,1)$, we have with probability \(1 - \delta\) at least (Proposition 3.1 in \cite{huet2023regression}):
\begin{multline}
  \sup_{h\in \mathcal{H}}
  \left| \widehat{R}_{k}(h \circ \theta) - R_{t(n,k)}(h \circ \theta) \right|  \le V_k =
  4 M^2\bigg(~
    \frac{2 \sqrt{ 2 \log( 3 / \delta)} + C \sqrt{V_{\mathcal{H}}}}{
    \sqrt{k}}  \\
    +~ \frac{  \frac{4}{3} \log( 3/\delta ) +  V_{\mathcal{H}}}{k} ~\bigg), \label{eq:deviation_bound_huet}
\end{multline}
where \(C\) is a universal constant and \(V_{\mathcal{H}}\) is the VC-subgraph dimension of \(\mathcal{H}\). This `deviation bound' can naturally be combined with the following decomposition of the excess risk at infinity (Theorem 3.3 in the cited reference)

\begin{align}\label{eq:excess}
 R_\infty(\widehat f)- R_\infty^* \le D_k + B_1(t_{n,k}) + B_2(\mathcal{H}), 
\end{align}
where $D_k = 2 V_k$ --see \eqref{eq:deviation_bound_huet}-- is a deviation term, and $ B_1,B_2$ are  two bias terms, 
\begin{equation*}
  \begin{cases}
    B_1(t) =  2\sup_{h \in \mathcal{H}} |R_\infty(h\circ\theta)- R_t(h\circ\theta)| & \text{(threshold bias)} \\
    B_2(\mathcal{H}) = \inf_{h \in\mathcal{H}} R_\infty(h \circ\theta) \;-\; R_\infty(h_\infty\circ\theta) &\text{(class bias)}. 
  \end{cases}
\end{equation*}
The first bias term, $B_1$, is specific to \EVT and arises from the non-asymptotic behavior of observations exceeding a fixed threshold $t$. It can be shown to vanish under additional regularity conditions on the class $\mathcal{H}$, such as total boundedness with respect to the uniform norm, or uniform boundedness of both the predictors and the angular densities; see Proposition 3.2 of \citeauthor{huet2023regression}. The class bias term, $B_2$, captures the potential discrepancy between the class of predictors and the regression function relative to the limiting distribution. This type of bias is ubiquitous in learning theory and is not specific to \EVT. 


\medskip

\noindent{\bf Illustrating the scope of assumptions.} We provide an intuitive, new example that fits the setting of Assumptions~\ref{assum_1_huet}, \ref{assum_2_huet}, and~\ref{hyp:cont_reg_func}. This can be viewed as a simplification of Example 2.2 in \citeauthor{huet2023regression}, which involves a nonlinear rescaling of the possibly unbounded target.
In the following statement, let $\|\point\|_{\rset^d}$ and
$\|\point\|_{\rset^{d+1}}$ be the $\ell_p$ norm,  $p \in
[1,\infty]$,  in $\rset^d$ and
$\rset^{d+1}$ respectively.\footnote{The argument is valid for any norms
$\|\point\|_{\rset^d},
\|\point\|_{\rset^{d+1}}$ such that canonical basis vectors have norm
equal to one, and such that $\|(\mb x,0)\|_{\rset^{d+1}} =
\|\mb x\|_{\rset^d}$ for $\mb x \in \rset ^d$.} For simplicity when clear from the context, the subscripts $\rset^d$, $\rset^{d+1}$ are dropped. 
\begin{proposition}[Rescaling an unbounded target and enforcing
  Assumptions~\ref{assum_1_huet},~\ref{assum_2_huet} and~\ref{hyp:cont_reg_func}]\label{prop:exampleRescale} ~
 \begin{enumerate} 
 \item  Let  $(\mb X,Z)$ be  regularly varying in $\rset^{d+1}$,
with limit distribution 
$$\Pi_\infty = \lim_{t\to\infty}\mathcal{L}\big(t^{-1}(\mb X,Z) ~|~\|(\mb X,Z)\|>t \big)$$ such that $\Pi_\infty\{(\mb x,z): \|\mb x\|>1\}>0$, and let $Y = Z / \|\mb X\| $. 
    Then the  random pair $(\mb X,Y)$ satisfies Assumption~\ref{assum_2_huet}, with limit distribution 
    $$
    P_\infty(\point) = \frac{\Pi_\infty\circ\varphi^{-1}(\point)}{
    \Pi_\infty\big\{(\mb x,z)~:~ \|\mb x\|>1\big\}
    },
    $$
    where  the mapping $\varphi:~(\rset^d \setminus\{0\}) \times \rset \to   (\rset^d \setminus\{0\}) \times \rset $ is a homeomorphism defined by  $\varphi(\mb x, z) = (\mb x,  z / \|\mb x\|)$. 
  

  \item Conversely, let  $(\mb X,Y)$ be a random pair satisfying 
    Assumptions~\ref{assum_1_huet}
    and~\ref{assum_2_huet}, with a limit distribution $P_\infty$, 
  Define $Z = \|\mb X\|Y$. Then the random pair $(\mb X,Z)$ is regularly varying in $\rset^{d+1}$, with limit distribuution $\Pi_\infty = P_\infty\circ \phi$. 
\item
  Let  $(\mb X,Z)$ be a random pair having continuous, regularly varying density $\pi$ on $\rset^{d+1}\setminus\{0\}$, with limit density $\pi_\infty$,  \ie there exists  a regularly varying function $b(t)$ with index $\alpha>0$ such that
  \[
\sup_{\|(\mb x,z) \| >1} \big| t^{d+1}b(t) \pi(t \mb x,tz)  - \pi_\infty(\mb x,z) \big|\xrightarrow[t\to\infty]{} 0. 
\]
Assume in addition that  the rescaled variable $Y=Z/\| \mb X\|$ is bounded (Assumption~\ref{assum_1_huet}).   Then the rescaled pair $( \mb X,Y)$ satisfies Assumption~\ref{assum_2_huet}, and it  has a continuous, regularly varying density
$p (\mb x,y) =  \|\mb x\| \pi( \mb x,\| \mb x\| y)$,  with limit density
$ p_\infty ( \mb x,y)= \| \mb x\| \pi_\infty(\mb x, \|\mb x\|y)$, and same scaling function as that of the pair $(\mb X,Z)$,
namely
\begin{equation}
  \label{eq:reg_var_density_xy}
  \sup_{\| \mb x\|\ge 1, y \in\rset}
  \big| 
  b(t) t^{d} p(t \mb x,y) - p_\infty(\mb x,y) \big| \xrightarrow[t\to\infty]{} 0. 
\end{equation}
Finally,   under the additional condition that the limit marginal density
$\pi_{\mb x,\infty}(\mb x) = \int_\rset \pi_\infty(\mb x,y)\ud y$ is lower bounded on
$\sphere$ in $\rset^d$,  \ie $\inf_\sphere \pi_{\mb x,\infty}(\omega)>0$ then the pair $(\mb X, Y)$ also satisfies Assumption~\ref{hyp:cont_reg_func}. 
\end{enumerate}
\end{proposition}
The proof is given in Appendix~\ref{ap:sectionRegression}.
  A convenient feature of the setting envisioned in Proposition~\ref{prop:exampleRescale},   is that guarantees regarding a prediction function $\widehat h\circ \theta(\mb x)$ relative to $Y := Z/\|\mb X\|$ immediately yield guarantees on the (rescaled)  error of the predictor $\widehat Z := \|\mb X\| \,\widehat h\circ \theta(\mb X)$.  
  Indeed   the (squared) scaled error then writes
  $
( \widehat Z - Z)^2 =  \|\mb X\|^2 ~(\widehat Y  - Y)^2.
  $
  
\section{High-Dimensional Extreme Covariates: \\ XLASSO}\label{sec:exlasso_section}

\subsection{Framework and Preliminaries}
The framework examined in \cite{huet2023regression} is intentionally simplified: the proposed algorithms minimize an empirical version of the squared error risk without incorporating a penalization term. This approach becomes impractical in common scenarios where the class of predictors is complex. A quintessential example of such a scenario is linear regression, where the feature space is \(\mathcal{X} = \mathbb{R}^d\) and the class of candidate prediction functions is $$
\mathcal{H} = \{h_{\boldsymbol{\beta}}: \mb x \mapsto \langle \boldsymbol{\beta}
, \mb x \rangle, 
\, \boldsymbol{\beta} \in \mathbb{R}^{d}\}.$$
In this case, \(\mathcal{H}\) is a VC-subgraph class with a VC-dimension \(V = d + 1\) \citep[\eg,][Chapter 3]{anthony2009neural}. When \(d\) is comparable to the (extreme) sample size,  overfitting becomes a significant issue. This exemplifies the limitations of traditional statistical methods and has motivated the development of high-dimensional statistics.

A prominent algorithm for high-dimensional settings, especially when
the optimal predictor is sparse, is the celebrated Lasso (Least
Absolute Shrinkage and Selection Operator) introduced in
\cite{tibshirani1996regression}. Lasso offers provable guarantees in
these scenarios, making it a cornerstone in high-dimensional
statistics and machine learning. For a pedagogical presentation of
theoretical results on Lasso, see Chapter 11 in
\cite{hastie2015statistical}. This section aims to extend some of these
results (namely, a bound on the prediction error) to least squares regression on extreme covariates and
demonstrate that mainstream theoretical results in high-dimensional
statistics can be adapted to the framework of \EVT under appropriate
and interpretable assumptions, with minimal additional complexity.

We  name XLASSO the learning algorithm defined by the following penalized risk minimization problem, which is a natural extension of the (Lagrangian) Lasso setting, 
with  same class $\mathcal{H}$ of linear predictors as above.
As in previous sections let  $[(i), 1\le i\le n]$ denote a random permutation such that
$\|\mb X_{(1)}\|\ge \cdots\ge \|\mb X_{(n)}\|$ and let $k\ll n$ and $\lambda>0$ be fixed. Then XLASSO solves the following convex optimisation problem, 
\begin{equation}
  \label{eq:lasso_problem}
  \operatorname{minimize}_{\boldsymbol{\beta}\in \rset^{d}}~~ \frac{1}{2 k}\sum_{i=1}^k \big\{Y_{(i)} - h_{\boldsymbol{\beta}}\circ\theta(\mb X_{(i)})\big\}^2 + \lambda \,\|\boldsymbol{\beta}\|_1~.  
\end{equation}
We let $\widehat{\boldsymbol{\beta}}$ denote the solution of~\eqref{eq:lasso_problem}. The form of~\eqref{eq:lasso_problem} is identical to that  of the standard Lasso, which allows the use of any standard machine learning library to solve it in a reasonable amount of  computational time.  
Although similar to the standard Lasso problem from a computational perspective, the theoretical  analysis of the performance of the solution of~\eqref{eq:lasso_problem}  requires some care, insofar as 
extracting  the subsample of variables associated with the $k$ largest norms $\|\mb X_{(i)}\|$ breaks the independence property of the original sample. In addition, because our interest is on the tails of the covariates, some work is needed regarding model assumptions ensuring some statistical guarantees for the solution of \eqref{eq:lasso_problem}. 
\begin{remark}[Lagrangian versus constrained Lasso]
 For simplicity, we focus our presentation on the Lagrangian Lasso estimator, which is the solution to Equation~\eqref{eq:lasso_problem}. Straightforward extensions to the constrained Lasso can be readily derived using similar, albeit simpler, arguments. For instance, in \cite{aghbalou2022cross}, a constrained logistic-Lasso algorithm is examined as a primary example of a model selection problem involving extreme covariates, and the choice of the constraint level is addressed via cross-validation.
\end{remark}

\subsection{Asymptotic Linear Model on Extreme Covariates}
Our primarily assumption is that a linear relationship exists between the target and the angular component of the predictor at asymptotic levels. 
The fact that the linear relationship concerns the \emph{angular component} of $ \mb X$ as $\|\mb X\|$ goes to infinity, is in line with the general theory of regression on extreme covariates developed in \cite{huet2023regression}.   We consider an heteroscedastic model to facilitate further developments in the context of  unbounded targets. 
\begin{assumption}[Linear model on extreme covariates]\label{as:linear}
  For some  $\boldsymbol{\beta}^*\in\rset^{d}$, 
$$Y =  \theta(\mb X)^\top \boldsymbol{\beta}^* + b(\mb X) +  \sigma(\mb X)  \varepsilon,  $$ 
where $\varepsilon$ is a  bounded, centered noise independent from $\mb X$,
$|\varepsilon|\le 1 $ 
almost surely,  the noise variance $\sigma(\mb x)>0$ satisfies:
$$
\begin{aligned}
  M_\varepsilon = \sup_{\mb x} \sigma(\mb x)<\infty. 
\end{aligned}
$$
Also, for some continuous angular function $\sigma_\theta$ defined on the unit sphere and bounded by $M_{\varepsilon}$,
$$
\sup_{\|\mb x\|>t} |\sigma(\mb x) - \sigma_\theta(\theta(\mb x))| \xrightarrow[t \to\infty]{} 0.
$$
In addition
the bias function 
$b:\rset^d\to \rset$ is bounded and vanishes at infinity,
$$
\sup_{ \|\mb x\|>t} |b(\mb x)|\xrightarrow[t\to\infty]{} 0.
$$
\end{assumption}
 The above  assumption of a linear relationship between $Y$ and $\theta(\mb X)$ only holds asymptotically as $\|\mb X\|\to \infty$, and the inclusion of a bias term $b(\mb X)$ must be accounted for in some way in the analysis.
For simplicity, we do not account for an offset term, although such an extension can easily be achieved at the cost of moderate additional notational complexity. 
If $\|\point\|$ is the  $\ell_1$ norm, the covariates are related by an  affine relation $\theta_{d}(\mb x) = 1 - \sum_{j=1}^{d-1}\theta_j(\mb x)$, so that including all  $d$ components of $\theta(\mb X)$ in the model is equivalent to including an offset term (\ie a constant predictor).   Another reasonable model would be to use any norm,
remove one component, say $\theta_d(\mb x)$, from the family of predictors, while including  an offset term. However this would break the symmetry among covariates and complicate notations, and we do not pursue this idea further.  Finally, the assumption that the bias term vanishes at infinity, so that the limit pair $(\mb X_\infty,Y_\infty)$ follows an exact linear model, could be weakened at the cost of additional technicality and error terms, leveraging related ideas in \cite{buhlmann2011statistics}. 



It turns out that the asymptotic linear model in
Assumption~\ref{as:linear} is a specific case of a generic noise
model considered in \citep[][Example 2.1]{huet2023regression}, 
namely $Y = g(\mb X, \varepsilon)$ where for all $\varepsilon$,
$\sup_{\|\mb x\|>t} | g(\mb x,\varepsilon) - g_\theta(\theta(\mb x), \varepsilon) |\to 0$, for
some bounded, continuous function $g_\theta$ defined on $\sphere$.
The following lemma derives immediately from this observation, and
shows that the main results in \cite{huet2023regression} apply.
\begin{lemma}[Assumption~\ref{as:linear} and \cite{huet2023regression}'s framework]\label{lem:link_linear_huet}
  Let $(\mb X,Y)$ satisfy
  the asymptotic linear model in Assumption~\ref{as:linear}, and assume that $\mb X$ is regularly varying. Then $(\mb X,Y)$ satisfies
  Assumption~\ref{assum_1_huet}, \ref{assum_2_huet}
  and~\ref{hyp:cont_reg_func}, with
  $M \le \|\boldsymbol{\beta}^*\|_\infty + \|b\|_\infty + M_\varepsilon$.
\end{lemma}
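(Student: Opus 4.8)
The plan is to reduce Assumption~\ref{as:linear} to the asymptotically-angular additive regression model analyzed in~\cite{huet2023regression} (Proposition B.2) and then transcribe its conclusions, supplying the explicit value of $M$ by a direct estimate.

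First I would rewrite the model in additive form: set $g(x) = \theta(x)^\top\beta^* + b(x)$ and $g_\theta(s) = s^\top\beta^*$ for $s\in\sphere$, so that Assumption~\ref{as:linear} reads $Y = g(X) + \varepsilon$. The next step is to check that the pair $(g,g_\theta)$ meets the structural requirements of that generic model: $g_\theta$ is linear, hence continuous, and bounded on the compact sphere; $g$ is bounded because $\theta(x)$ lies on the unit sphere (so $\theta(x)^\top\beta^*$ is bounded) and $b$ is bounded by Assumption~\ref{as:linear}; and, crucially, $\sup_{r(x)>t}|g(x) - g_\theta(\theta(x))| = \sup_{r(x)>t}|b(x)| \to 0$ as $t\to\infty$, which is precisely the vanishing-bias clause of Assumption~\ref{as:linear}.

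Using the standing hypothesis that $X$ is regularly varying together with the previous paragraph, I would then apply Proposition B.2 of~\cite{huet2023regression}. This yields at once that $(X,Y)$ satisfies Assumption~\ref{assum_2_huet} — the radial part inherits the regular variation of $\|X\|$, and $\law((t^{-1}X, Y)\mid \|X\|>t)$ converges to a limit $P_\infty$ whose regression function is $f_{P_\infty} = g_\theta\circ\theta$ — and that Assumption~\ref{hyp:cont_reg_func} holds, since $f_{P_\infty} = g_\theta\circ\theta$ is continuous on $\rset^d\setminus\{0\}$ and $\EE[\,|f^*(X) - f_{P_\infty}(X)| \mid \|X\|\ge t\,]$ is dominated by $\sup_{r(x)>t}|b(x)|$, which tends to $0$. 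It remains to establish Assumption~\ref{assum_1_huet} with the announced constant, which I would do by hand rather than through the cited result: by the triangle inequality $|Y| \le |\theta(X)^\top\beta^*| + |b(X)| + |\varepsilon|$, and since $\theta(X)$ lies in the positive unit sphere, $\theta(X)^\top\beta^*$ is a convex combination of the coordinates of $\beta^*$ in the $\ell_1$ geometry used in Section~\ref{sec:exlasso_section}, so $|\theta(X)^\top\beta^*| \le \|\beta^*\|_\infty$; together with $|b|\le\|b\|_\infty$ and $|\varepsilon|\le M_\varepsilon$ this gives $M \le \|\beta^*\|_\infty + \|b\|_\infty + M_\varepsilon$.

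I do not expect a genuine obstacle here: the argument is essentially bookkeeping plus one citation. The two points that need a little care are (i) matching the precise regularity and centering hypotheses on the noise in the additive model of~\cite{huet2023regression} with the terse ``bounded noise'' wording of Assumption~\ref{as:linear} — in particular whatever centering makes $\beta^*$ the true limiting regression coefficient — and (ii) tracking the norm-dependent constant in $|\theta(X)^\top\beta^*|\le\|\beta^*\|_\infty$, which is exact in the $\ell_1$/simplex setting used here but would pick up a norm-equivalence factor for an arbitrary norm.
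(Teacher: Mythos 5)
Your proposal is correct and follows essentially the same route as the paper: the paper's proof is precisely the observation (made explicit in the surrounding text) that Assumption~\ref{as:linear} with $g_\theta(s)=s^\top\beta^*$ and $g(x)=\theta(x)^\top\beta^*+b(x)$ is an instance of the generic additive model of Proposition B.2 in \cite{huet2023regression}, from which Assumptions~\ref{assum_1_huet}--\ref{hyp:cont_reg_func} follow, with the stated $M$ obtained by the same triangle-inequality estimate. Your final caveat about the norm-dependence of $|\theta(X)^\top\beta^*|\le\|\beta^*\|_\infty$ is a fair observation and matches the $\ell_1$/simplex convention used in Section~\ref{sec:exlasso_section}.
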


The following key example  leverages Proposition~\ref{prop:exampleRescale}  and demonstrates that the bounded target assumption on \(Y\)  (or equivalently, on $M_\varepsilon$) does not disqualify unbounded targets, which are frequent in Extreme Value Theory, provided an appropriate rescaling 
is applied. Informally, the assumption is that for large $\|\mb X\|$,  and some \emph{homogeneous} function $s(\mb x) = \|\mb x\| s_\theta(\theta(\mb x))$, 
$$
Z ~\approx~  \left\langle \mb X,  \boldsymbol{\beta} \right\rangle ~+~ s(\mb X) \varepsilon  ~+~  o(\,\|\mb X\|\,)\,. 
$$
\begin{example}[Rescaling an unbounded target -- prediction in a regularly varying random vector]\label{eg:rvVector}
  Let $(\mb X,Z)\in\rset^d\times \rset$ be a random pair (covariate, target)  where $\mb X$ is regularly varying, $\mb X\neq 0$ almost surely,  and assume the following semi-parametric linear model
  \begin{equation}
    \label{eq:linear_model_RV}
    Z ~=~ \left\langle \mb X , \boldsymbol{\beta}^* \right\rangle ~+~  B(\mb X) ~+~ s( \mb X ) \varepsilon\,,     
  \end{equation}
  where the bias  function $B$ satisfies  
  $$\sup_{\mb x \in\rset^d} \frac{|B(\mb x)| }{\|\mb x\|\vee 1} = M_B<\infty, \textrm{ and } \sup_{\|\mb x\|>t} \frac{|B(\mb x)| }{\|\mb x\| } \xrightarrow[t\to\infty]{} 0; $$
  the noise $\varepsilon$ is  centered,  bounded by $1$ and independent of $X$ as in Assumption~\ref{as:linear}, 
and the variance function satisfies
$$
\sup_{\mb x \in\rset^d}\frac{s(\mb x)}{\|\mb x\|\vee 1} = M_\varepsilon<\infty, 
$$
and 
$$
\sup_{\|\mb x\|>t} \Big|\frac{s(\mb x)}{\|\mb x\|} - \sigma_\theta(\theta(\mb x)) \Big|\xrightarrow[t\to\infty]{} 0, 
$$
where $\sigma_\theta$ is a continuous function defined on the sphere $\sphere$.

  Then, letting $Y = Z/(\|\mb X\|\vee 1)$, the following statements hold true 
  \begin{enumerate}
  \item $(\mb X,Y)$ satisfies Assumption~\ref{as:linear} with
    $$ b(\mb x) = B(\mb x)/(\|\mb x\|\vee 1), ~~
    \sigma(x) = s(x)/(\|\mb x\|\vee 1)  $$  
  \item  $(\mb X,Y)$ satisfies Assumption~\ref{assum_1_huet} with
    $M = M_\varepsilon + M_B +\|\boldsymbol{\beta}^*\|_\infty$,  as well as  Assumptions~\ref{assum_2_huet} and~\ref{hyp:cont_reg_func}
  \item $(\mb X,Z)$ is regularly varying. 
\end{enumerate}
Statement $1$ above derives immediately from the conditions encapsulated respectively in~\eqref{eq:linear_model_RV} and in Assumption~\ref{as:linear}. 
Statement $2$ is a consequence of Statement~$1$ combined with Lemma~\ref{lem:link_linear_huet}. Finally Statement $3$ is a direct  application of Proposition~\ref{prop:exampleRescale}. 
  \end{example}

  \subsection{XLASSO: Statistical Guarantees}

 We now present some  nonasymptotic  statistical results regarding the prediction error in the framework of a tail linear model described in Assumption~\ref{as:linear}. We first   introduce convenient notations, 
 $\mb y   = (Y_{(1)},\ldots, Y_{(k)} )\in\rset^k$ is the vector of observed targets associated with the $k$ largest covariates,
 $$\mb W = (\theta( \mb X_{(1)})^\top,\ldots, \theta(\mb X_{(k)}) ^\top )^\top \in \rset^{k\times p
 }$$
 is the design matrix made of the angular components of these covariates. The residual vector ${\mb e} = \mb y  - \mb W\boldsymbol{\beta}^*$ shall play a crucial role in the analysis. 


 We can now  reformulate Theorem~1.2-a 
in  \cite{hastie2015statistical} (originally proved in \cite{bunea2007sparsity}) in our framework, regarding
the prediction error $\|\mb W (\widehat{\boldsymbol{\beta}} - \boldsymbol{\beta}^*) \|_2$. 
The skeptical reader may doubt the applicability of such a result in our context which departs significantly from the standard linear regression problem, as detailed above. However it should be noted that the  result borrowed from \cite{hastie2015statistical} is valid \emph{with probability one}, as it relies  solely on algebraic manipulations and on the optimality property of $\widehat{\boldsymbol{\beta}}$ with respect to \eqref{eq:lasso_problem}.

\begin{lemma}[Prediction  error,  \cite{bunea2007sparsity}, Theorem  11.2-a in \cite{hastie2015statistical}]\label{lem:estim-error-largeLambda}
  Assume that the penalty term is chosen sufficiently large, namely  $\lambda \ge 2 k^{-1} \|\mb W^\top {\mb e}\|_\infty $. The (in-sample) prediction error of the XLASSO estimator then satisfies 
  \begin{equation}
    \label{eq:predError_slow_largeLambda}
 k^{-1}\|\mb W( \widehat{\boldsymbol{\beta}} - \boldsymbol{\beta}^*)\|_2^2  \le 12 \,\lambda \,\|\boldsymbol{\beta}^*\|_1 .       \end{equation}
  
\end{lemma}

\begin{remark}\label{rem:restrictedEV_fastrates_estimation}
  Considering a learning problem on extreme covariates aims to account
  for the high variability of the input, making a fixed design setting
  inappropriate. Thus, we avoid additional assumptions like
  'restricted eigenvalue conditions' or 'irrepresentability
  conditions' on the design matrix $\mb W$, which could control the
  estimation error $\|\widehat{\boldsymbol{\beta}} - \boldsymbol{\beta}^*\|_2$ and achieve fast
  rates on the prediction error
  $\|\mb W (\widehat{\boldsymbol{\beta}} - \boldsymbol{\beta}^*) \|_2^2$ (see Chapter 11 in
  \cite{hastie2015statistical}). Instead, we focus on establishing
  slow rates for the prediction error without special conditions on
  the design matrix. We conjecture that fast rates and control over
  the estimation error could be achieved under appropriate assumptions
  on the tail distribution of $\theta(\mb X)$, by adapting arguments from
  \cite{rudelson2012reconstruction}. This question is left for future
  work.
 \end{remark}
To establish upper bounds on the prediction errors $\mb W(\widehat{\boldsymbol{\beta}} -\boldsymbol{\beta}^*)$, control over the residual of extremes $2 k^{-1}\|\mb W^\top {\mb e}\|_\infty$ is required. This control allows for the subsequent establishment that the penalty $\lambda$ in Lemma~\ref{lem:estim-error-largeLambda} can be chosen sufficiently small to yield a nonvacuous bound. This key step in the analysis diverges from existing approaches surveyed in~\cite{hastie2015statistical}. 
As discussed above, the main technical bottleneck is to control the deviations of the residuals ${\mb e} = \mb y - \mb W\boldsymbol{\beta}^*$. 

\begin{proposition}[Deviations of the residual vector]\label{prop:deviations_residual} Let Assumption~\ref{as:linear} hold true,  
  assume that $\|\mb X\|$ has a continuous distribution. 
  With probability $1-\delta$,
  $$
k^{-1}\|\mb W^\top {\mb e}\|_\infty  \le M_\varepsilon\sqrt{\frac{\log(4d/\delta)}{2k}} + \bar b(t_{n, \tilde k(\delta/2)}),  
$$
where $t_{n,\kappa}$ denotes the $1-\kappa/n$ quantile of the random variable $\|\mb X\|$,  $$\tilde k(\delta)  = k\Big(1 + \sqrt{\frac{3\log(1/\delta)}{k}} +  \frac{3\log(1/\delta)}{k} \Big), $$ 
and  $\bar b(t) = \sup_{\|\mb x\|>t}b(\mb x)$.
\end{proposition}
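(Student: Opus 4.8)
The plan is to use the asymptotic linear model of Assumption~\ref{as:linear} to split the residual vector $\mathbf w = \mathbf y - \mathbf Z\beta^*$ into a vanishing bias part and a genuinely centered noise part, and to bound the two resulting contributions to $k^{-1}\|\mathbf Z^\top\mathbf w\|_\infty$ separately. Under Assumption~\ref{as:linear} the $i$-th coordinate of $\mathbf w$ equals $w_i = b(X_{(i)}) + \varepsilon_{(i)}$, so that for every $j\in\{1,\dots,d\}$,
\[
 k^{-1}(\mathbf Z^\top\mathbf w)_j = k^{-1}\sum_{i=1}^{k}\theta_j(X_{(i)})\,b(X_{(i)}) + k^{-1}\sum_{i=1}^{k}\theta_j(X_{(i)})\,\varepsilon_{(i)} .
\]
Since $|\theta_j(x)|\le 1$ for the $\ell_p$ angle, the first (bias) sum is bounded in absolute value by $\max_{i\le k}|b(X_{(i)})|$; as $b$ vanishes at infinity and $\|X_{(i)}\|\ge \|X_{(k)}\|$ for $i\le k$, this is itself at most $\bar b(\|X_{(k)}\|)$. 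The first real task is therefore to produce a high-probability lower bound on $\|X_{(k)}\|$.

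For that I would run a standard binomial-tail argument. Writing $m=\tilde k(\delta/2)$ and $S_m=\#\{i\le n:\|X_i\|>t_{n,m}\}$, continuity of the law of $\|X\|$ gives $S_m\sim\mathrm{Bin}(n,m/n)$ with mean $m$, while $\{\|X_{(k)}\|<t_{n,m}\}=\{S_m<k\}$. The expression defining $\tilde k(\cdot)$ is precisely what makes the multiplicative Chernoff lower-tail bound $\PP[S_m\le(1-\epsilon)m]\le\exp(-m\epsilon^2/2)$, evaluated at $(1-\epsilon)m=k$, deliver $\PP[S_m<k]\le\delta/2$. Hence on an event $\mathcal E_0$ of probability at least $1-\delta/2$ one has $\|X_{(k)}\|\ge t_{n,\tilde k(\delta/2)}$, so that the bias part of every coordinate is at most $\bar b(t_{n,\tilde k(\delta/2)})$.

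For the noise part, the key device is to condition on the covariate sample $X_1,\dots,X_n$; this is what circumvents the fact that retaining the $k$ largest norms destroys independence. Conditionally on $X_{1:n}$ the ranking permutation is deterministic (by continuity), the coefficients $c_{ij}:=\theta_j(X_{(i)})\in[-1,1]$ are constants, and, since the pairs $(X_i,Y_i)$ are i.i.d., the variables $\varepsilon_{(i)}$, $i\le k$, are conditionally independent, centered, and of amplitude at most $M_\varepsilon$. Thus $k^{-1}\sum_{i\le k}c_{ij}\varepsilon_{(i)}$ is a conditional average of $k$ independent centered bounded variables, to which Hoeffding's inequality applies; inverting it and choosing $s=M_\varepsilon\sqrt{\log(4d/\delta)/(2k)}$ bounds $\PP[\,|k^{-1}\sum_{i\le k}c_{ij}\varepsilon_{(i)}|>s \mid X_{1:n}\,]$ by $\delta/(2d)$, uniformly in the conditioning and hence unconditionally. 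A union bound over $j=1,\dots,d$ (the factor $d$ being absorbed into the $4d$ inside the logarithm) puts us, on an event $\mathcal E_1$ of probability at least $1-\delta/2$, in the situation where the noise part of every coordinate is at most $M_\varepsilon\sqrt{\log(4d/\delta)/(2k)}$. Intersecting $\mathcal E_0$ and $\mathcal E_1$ yields the announced inequality with probability at least $1-\delta$.

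The genuine, if modest, obstacle is the loss of independence caused by extracting the $k$ covariates of largest norm: it is what forbids a one-line application of a concentration inequality to $k^{-1}\mathbf Z^\top\mathbf w$. It is handled here by the conditioning trick for the noise term, together with the separate high-probability control of the $k$-th largest norm $\|X_{(k)}\|$ through the binomial deviation bound; the remaining manipulations (the coordinatewise estimate $|\theta_j|\le 1$, Hoeffding's inequality, the two union bounds, and matching the constants with the definition of $\tilde k$) are routine.
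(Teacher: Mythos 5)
Your argument is correct and follows essentially the same route as the paper: the same decomposition $w_{(i)} = b(X_{(i)}) + \varepsilon_{(i)}$, the same coordinatewise bound $|b(X_{(i)})| \le \bar b(\|X_{(k)}\|)$ using $|\theta_j|\le 1$, the same conditioning-on-the-covariates device to recover conditional independence of the permuted noise variables, and the same Hoeffding/McDiarmid step with a union bound over $j$. The only place you diverge is in the high-probability lower bound on $\|X_{(k)}\|$: you apply the multiplicative Chernoff lower-tail bound $\PP[S_m \le (1-\epsilon)m] \le \exp(-m\epsilon^2/2)$ to the binomial exceedance count $S_m = \#\{i:\|X_i\|>t_{n,m}\}$, whereas the paper's Lemma~\ref{lem:deviationEmpQuantiles} passes to uniform order statistics $U_{(k)} = F(\|X\|)_{(k)}$ and invokes a sub-gamma (Bernstein-type) deviation bound from \cite{reiss2012approximate}. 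These are two reformulations of the same binomial tail event, and a short computation confirms that with $m = \tilde k(\delta/2)$ one indeed has $(m-k)^2/(2m) \ge \log(2/\delta)$, so your Chernoff route delivers exactly the same $\tilde k$. Your version is arguably a bit more self-contained since it avoids the external reference, at the cost of not isolating a reusable lemma. One small point of rigor, which the paper shares: the claim that the conditionally permuted noise $\varepsilon_{(i)}$ is centered requires the (here implicit) structural condition that $\varepsilon$ is exogenous with $\EE[\varepsilon \mid X]=0$; Assumption~\ref{as:linear} as written only states boundedness, so both you and the paper are reading a centering/exogeneity condition into it.
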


Our main result derives immediately from Lemma~\ref{lem:estim-error-largeLambda} and Proposition~\ref{prop:deviations_residual}.
\begin{theorem}[XLASSO:  prediction guarantees]\label{theo:exlasso}
  Let Assumption \ref{as:linear} hold true and let $\|\mb X\|$ have a continuous distribution.  
  Define the tolerance level
  $$B(k,\delta) = M_\varepsilon\sqrt{\frac{\log(4d/\delta)}{2k}}.$$

  If  $\lambda$ is chosen so that 
  $$
  \lambda \ge B(k,\delta) 
  + \bar  b(t_{n, \tilde k(\delta/2)}),
  $$
  then the bound \eqref{eq:predError_slow_largeLambda} on the prediction error holds true 
  with  probability at least $1-\delta$.
  In particular if $k/n$ is small enough so that
  $$\bar b(t_{n, \tilde k(\delta/2)}) \le B(k,\delta)$$ and if
  $\lambda \in [2B(k,\delta),~ 2C B(k,\delta)]$ for some $C>1$, then
    with probability at least $1-\delta$,  the prediction errors  $\mb W( \widehat{\boldsymbol{\beta}} - \boldsymbol{\beta}^*)$ satisfy
    \begin{equation}
      \label{eq:goodLambda_prederror_slow}
      \frac{1}{k} \|\mb W( \widehat{\boldsymbol{\beta}} - \boldsymbol{\beta^*})\|_2^2  \le 24\,  C \, M_\varepsilon \,\| \boldsymbol{\beta^*}\|_1\sqrt{\frac{\log(4d/\delta)}{2k}}. 
    \end{equation}
  

  \end{theorem}

\begin{remark}[Choice of $k$ and $\lambda$]
Theorem~\ref{theo:exlasso} suggests choosing $\lambda$ of order $O(\sqrt{\log(d)/k})$. While Lepski-type or adaptive validation methods are theoretically viable, cross-validation is often preferred in practice. The latter method proves successful in our experiments.  Further theoretical investigation is needed and left for future work.
\end{remark}


\subsection{Illustrative Numerical Experiments}
Our aim is to demonstrate the utility of introducing an $\ell_1$ penalty, as in XLASSO, in moderate-to-high dimensional settings for extrapolation on the covariates tail. We compare this approach to a baseline linear model trained on the angular component of extreme covariates. This baseline is a specific instance of the ROXANE algorithm proposed in \cite{huet2023regression}, namely an ERM algorithm without a penalty term, trained on the angles of extremes. For a comparison of this specific baseline with various other statistical and machine learning approaches, we refer to the experimental section of \cite{huet2023regression}. \vspace{0.2cm}

\noindent {\bf Simulated Data.}
Data are generated using an additive noise model that satisfies Assumptions~\ref{assum_1_huet},~\ref{assum_2_huet}, and~\ref{hyp:cont_reg_func} (Example 2.1 in \cite{huet2023regression}). Specifically, $\mb X \in \mathbb{R}^d$ follows a multivariate symmetric logistic distribution with dependence parameter $a=0.5$, making $\mb X$ simple max-stable and $\theta(\mb X)$ continuously distributed on $\sphere$, nearly uniform. The model is given by:

$$
Y = \langle \,\theta(\mb X), \,\boldsymbol{\beta}_0 \,\rangle ~+~ \frac{1}{\log(1+\|\mb X\|)} \langle \,\theta(\mb X), \,\boldsymbol{\beta}_1 \rangle + \varepsilon,
$$
where $\varepsilon$ is a bounded noise, specifically a truncated standard Gaussian noise on $[-2,2]$. We set $d=100$. The parameter $\beta_0$ has five entries equal to one, with the rest being zero. The 'bulk' parameter $\beta_1$ is a constant vector with all entries equal to one.

Training datasets of fixed size $n=10\,000$ are generated, with varying extreme sample sizes $k = \tau n$ for $\tau \in [0.011, 0.05]$. The parameter $\lambda$ is chosen for each $k$ and each replication using automatic cross-validation with the \texttt{LassoLarsCV} method in \texttt{scikit-learn}. The mean squared error is evaluated on a separate test dataset of size $1,000,000$, using the fraction $\tau_{\text{test}} = 0.01$ of the test data with the largest covariate norm. The procedure is repeated $N = 20$ times, and the average results along with the $(0.1-0.9)$ inter-quantile range are displayed in Figure~\ref{fig:simu_lasso}.
\begin{figure}[h]
  \centering
  \includegraphics[width=0.4\linewidth]{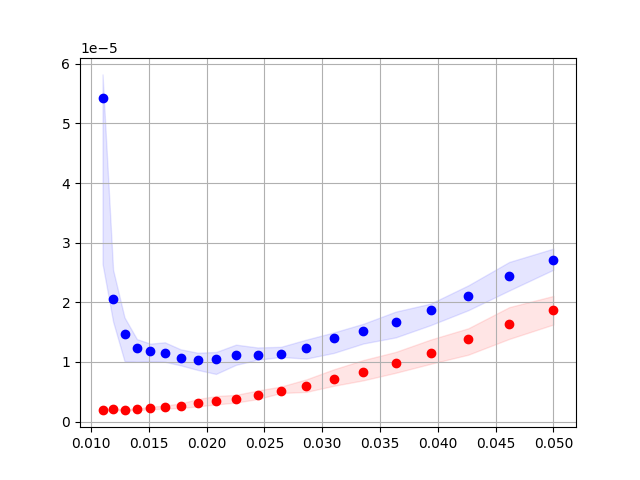}
  \caption[Synthetic data:mean squared error]{Simulated data in the additive noise model: mean squared error as a function of the ratio $\tau = k/n$. Red dots: XLASSO. Blue dots: linear model (witout penalization).}
  \label{fig:simu_lasso}
\end{figure}
\vspace{0.2cm}

\noindent {\bf Industry Portfolios Dataset.} This open access dataset has been used multiple times in the  \EVT literature \citep{meyer2024multivariate}, \citep{huet2023regression} as it provides an easy to manipulate example of relatively high-dimensional dataset ($49$ variables) with however a large number of observations $(n=13577)$.
In this work we take the \texttt{Trans} variable (transportation
sector) as a target, to be predicted given that the other variables
are large. In this example the covariate vector $\mb X$ has dimension
$d=48$. Based on the experiments in previous works mentioned above
bringing evidence of multivariate regular variation, we leverage Proposition~\ref{prop:exampleRescale} and we
consider the target $Y = Z/\|\mb X\|$ where $\|\point\|$ is the Euclidean
norm and $Z$ is the $\texttt{Trans}$ variable. The validity of the
boundedness assumption is investigated in the left panel of
Figure~\ref{fig:results_portfolio}, which reports the range (minimum
and maximum) of the values $\{Z_{(i)}/\|\mb X_{(i)}\| , i\le k \} $, as a
function of $k$. Stabilization of the empirical range of
$\mathcal{L}(Y ~|~\|\mb X\|\ge \|{\mb X}_{(k)} \|)$ brings strong evidence
that Assumption~\ref{assum_1_huet} is satisfied, especially above
large thresholds.  
\begin{figure}[h]
  \centering
  \includegraphics[width=0.3\linewidth]{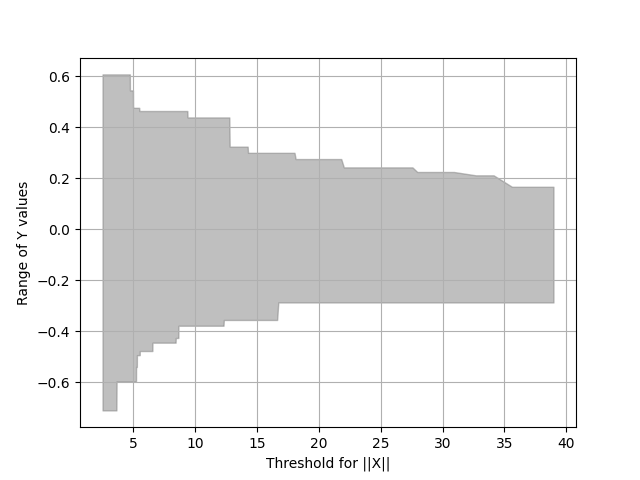}
  \includegraphics[width=0.3\linewidth]{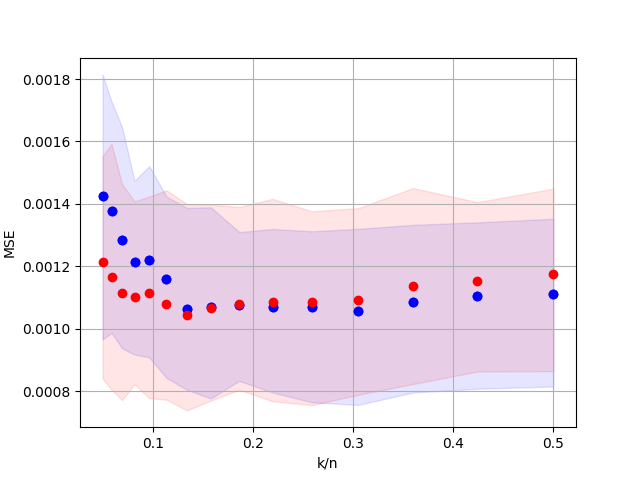}
  \caption{Left panel: Empirical support of $\mathcal{L}(Y ~|~\|\mb X\|\ge \|\mb X_{(k)} \|)$ versus the threshold $\|\mb X_{(k)} \|$.
    Right panel: Cross-validation error of XLASSO (red points) versus linear regression.\label{fig:results_portfolio}}
\end{figure}

The performance of XLASSO compared to linear regression (as in the experiments with synthetic data) is presented in the right panel of Figure~\ref{fig:results_portfolio}. Cross-validation is employed to evaluate the error. Over \( N = 50 \) independent experiments, a test set and a training set are randomly selected, with the test set being larger (\(0.8 \times n\), where \( n \) is the number of observations) to facilitate the evaluation of generalization error above thresholds potentially unseen in the training set. The empirical quantile level at the testing step is set at \( 1 - \tau_{\text{test}} \) with \( \tau_{\text{test}} = 0.005 \). At the training step, the number \( k \) of the largest observations retained for training is \( k = \lfloor \tau n_{\text{train}} \rfloor = \lfloor \tau \times 0.2 \times n \rfloor \) for \( \tau \in [0.05, 0.5] \). Consistent with the results from simulated data, XLASSO improves out-of-distribution generalization performance, with the effect being more pronounced when the number \( k \) of training data retained is small.

\section{Conclusion}\label{sec:perspectives}
In this article, we have endeavored to present an overview of some recent results combining extreme value theory and statistical learning. Our main objective was to demonstrate that it is possible to bring these two fields together in a common, nonparametric and nonasymptotic framework, and to highlight new methodological issues. The latter are inherent in the role played by the assumption of regular variation and the treatment of the resulting biases. Compared to traditional statistical learning techniques, additional standardization techniques are required to learn in tail regions. Perhaps most importantly, while the larger the training dataset, the lower the impact of statistical error on ERM methods in usual machine learning, the choice of the fraction $k$ of extreme examples is subject to a new trade-off: if it is too small, the frequentist principle of statistical learning cannot be effective, and if it is too large, the limiting behavior of distribution tails promised by multivariate regular variation is not well captured. We hope that this article will pave the way for further work on these methodological issues, which are likely to be found in many other predictive learning problems in tail regions, and lead to the development of successful algorithms.




\section*{Declarations}


\begin{itemize}
\item Funding:  Anne Sabourin's research was partially funded by the ANR PRC grant EXSTA, ANR-23-CE40-0009-01.  
\end{itemize}






\bibliographystyle{apalike}
\bibliography{bibli-general_1}


\appendix
\section{Proof of  Proposition~\ref{prop:exampleRescale} } 
\label{ap:sectionRegression}

The following  lemma is key to the proof of Proposition~\ref{prop:exampleRescale}.
The required conditions on the norms on $\rset^d$ and $\rset^{d+1}$ in the statement  hold true for any $\ell_q$ norm, $q\in(1, \infty]$. Below, we use the same notation  $\|\point\|$ for a norm on $\rset^d$ and on $\rset^{d+1}$ such that the canonical basis vectors have unit norm and $\|(\mb x,0)\| = \|\mb x\|$ for $\mb x\in\rset^d$.
\begin{lemma}[Conditioning on one component]\label{lem:RV_oneCOmponent_conditioning}
  Let $(\mb X,Z)$ be a random pair in $\rset^{d}\times \rset$, such that $\PP(\mb X=0)= 0$. 

  \begin{enumerate}
  \item Let  $(\mb X,Z)$ be  regularly varying, with limit distribution $\Pi_\infty$ supported on $\{(\mb x,z):\|(\mb x,z)\|>1\}$, defined by 
    \begin{equation}
      \label{eq:cv_xz_given_xz}
     \mathcal{L}\Big(t^{-1}(\mb X,Z) ~ |~ \|(\mb X,Z)\|>t \Big) \wto \Pi_\infty.  
    \end{equation}
    Assume additionally that $\Pi_\infty\{(\mb x,z): \|\mb x\|>1 \}>0$. 

    Then it also holds that 
 \begin{equation}
      \label{eq:cv_XZ_given_X}
     \mathcal{L}\Big(t^{-1}(\mb X,Z) ~ |~ \|\mb X\|>t \Big) \wto \tilde P_\infty 
    \end{equation}
    for some limit distribution $\tilde P_\infty$  supported on
$\{(\mb x,z):\|\mb x\|>1\}$.  In addition
$\tilde P_\infty$ and $\Pi_\infty$ are related  through the following identity,
  \begin{equation*}
    \tilde P_\infty(\point) = \Pi_\infty\big(\point)
    /\Pi_\infty \{ (\mb x,z): \|\mb x\|>1 \}. 
  \end{equation*}
  In other words if
  $(\tilde{ \mb X}_\infty, \tilde Z_\infty)\sim \tilde P_\infty$ and if
  $(\mb X'_\infty, Z'_\infty) \sim \Pi_\infty $, then
  $$
  \mathcal{L} (\tilde{\mb  X}_\infty, \tilde Z_\infty) =
  \mathcal{L} \Big( ( \mb X'_\infty, Z'_\infty)   ~|~  \|\mb X'_\infty\|>1\Big).
  $$
    
\item Conversely, let $(\mb X,Z)$ be a random pair and let $\tilde P_\infty$ be a distribution such that the   limit relation~\eqref{eq:cv_XZ_given_X} holds, and 
  additionally, assume  that the ratio $|Z|/\|\mb X\|$ is almost surely bounded by
  some constant $ M >0$. Then necessarily $\tilde P_\infty$ is
  supported on the truncated cone
  $$\tilde{ \mathcal{C}} = \{(\mb x,z): \|\mb x\|> 1 , |z|\le M\|\mb x\|\},$$ and
  then also \eqref{eq:cv_xz_given_xz} holds, with a limit distribution $\Pi_\infty$ is
  supported on the truncated cone
  $$\mathcal{C}' = \{( \mb x,z): \|(\mb x,z)\|> 1 , |z|\le M\|\mb x\|\}.$$
  

    
  
  \end{enumerate}
\end{lemma}
\begin{proof}
{\bf 1. }  
That~\eqref{eq:cv_xz_given_xz} implies \eqref{eq:cv_XZ_given_X}  is an easy exercise: 
For any measurable set $A\subset \{({\mb x},z): \|x\|>1\}$ 
such that $\Pi_\infty(\partial A) =0$, it holds that $\|{\mb X}\|>t \Rightarrow \|({\mb X},Z)\|>t$, thus 
  \begin{align*}
    \PP\Big(t^{-1}({\mb X},Z) \in A~|~\|{\mb X}\|>t\Big)&  = \PP\Big(t^{-1}({\mb X},Z) \in A~|~\|({\mb X},Z)\|>t\Big)\times
                                              \frac{ \PP \Big(\|({\mb X},Z)\|>t \Big)}{
                                              \PP\Big(\|({\mb X})\|>t\Big)}\\
    &\xrightarrow[t\to\infty]{}
    \Pi_\infty(A) \times \frac{ 1 }{\Pi_\infty\{ ({\mb x},z): \|\mb x\|>1 \} } . 
  \end{align*}
  This proves that~\eqref{eq:cv_xz_given_xz} implies \eqref{eq:cv_XZ_given_X}.
  
{\bf 2.}
Conversely, assume that~\eqref{eq:cv_XZ_given_X} holds and that $|Z|/\|{\mb X}\|<M$ almost surely. 
The fact that  the support of  $\tilde P_\infty$ is included in the truncated cone $\tilde{ \mathcal{C}}$ derives immediately from the assumptions.  
Also, with probability one, 
  $\|({\mb X},Z)\| \le\|{\mb X}\| +  |Z|  \le (M+1)\|{\mb X}\|$, 
  thus $\| {\mb X} \| \ge (M+1)^{-1} \| ({\mb X},Z)\|$. 
  A similar argument shows the following inclusions regarding  the truncated cones defined in the statement, 
  $$
(M+1) \mathcal{C}' \subset  \tilde{ \mathcal{C}} \subset { \mathcal{C}'}. 
  $$
 Thus, 
for any measurable set $A\subset \mathcal{C}'$ such that $\tilde P_\infty( (M+1)\partial A)= 0$,  
\begin{align*}
&  \PP\Big( t^{-1} ({\mb X},Z) \in A    ~|~  \| ({\mb X},Z) \| > t\Big) \\
  & = \PP \Big( t^{-1} ({\mb X},Z) \in A    ~|~  \| {\mb X} \|> t/(M+1) \Big) \times
    \frac{\PP( \|{\mb X}\| > t/(M+1) )}{ \PP( \|({\mb X},Z)\| > t ) } \\
    & \overset{u = t/(M+1)}{=} \PP \Big( u^{-1} ({\mb X},Z) \in (M+1)A    ~|~  \|{\mb X}\| > u \Big) \times
      \frac{\PP( \|{\mb X}\| > u )}{ \PP( \|({\mb X},Z)\| > u (M+1) ) } \\
& \xrightarrow[u\to \infty]{} \frac{\tilde P_\infty ((M+1)A) }{
  \tilde P_\infty ((M+1) \{ ({\mb x},z): \|({\mb x},z)\|>1 \})} 
\end{align*}
The proof is complete upon showing using  standard arguments  that continuity sets of $\tilde P_\infty$ and $\Pi_\infty$ are invariant under multiplication by a constant greater than one.
\end{proof}

\paragraph{\bf Proof of Proposition~\ref{prop:exampleRescale}}~
{\bf 1.}  Under the assumptions of Proposition~\ref{prop:exampleRescale}, \eqref{eq:cv_XZ_given_X} holds true by virtue of Lemma~\ref{lem:RV_oneCOmponent_conditioning}.1. 
The function $\varphi$ defined on $\{({\mb x},z):  x \neq 0\}$ by $\varphi({\mb x},z) = ({\mb x}, z/\|\mb x\|)$ is continuous. The continuous mapping theorem combined with~\eqref{eq:cv_XZ_given_X} then yields
$$
\mathcal{L}\Big(\varphi\big[ t^{-1}({\mb X},Z) \big] ~|~ \|{\mb X}\|>t \Big)\wto \tilde P_\infty\circ \varphi^{-1}, 
$$
where  $\tilde P_\infty$ is defined in Lemma~\ref{lem:RV_oneCOmponent_conditioning}.
However $\varphi\big[ t^{-1}({\mb X},Z) \big] = (t^{-1}{\mb X}, Z/\|{\mb X}\|) =(t^{-1}{\mb X}, Y)$ almost surely, and we obtain
$$
\mathcal{L}\Big(\big( t^{-1}{\mb X}, Y 
\big) ~|~ \|{\mb X}\|>t \Big)\wto \tilde P_\infty\circ \varphi^{-1}.
$$
 The latter display is precisely  Assumption~\ref{assum_2_huet} with $P_\infty = \tilde P_\infty\circ \varphi^{-1}$. Finally the relationship between $\Pi_\infty$ and $P_\infty$ derives immediately from the relationship between $\Pi_\infty$ and $\tilde P_\infty$ stated in Lemma~\ref{lem:RV_oneCOmponent_conditioning}. 
In other words, 
  $$
  P_\infty= \mathcal L \Big( 
  ({\mb X}'_\infty, ~ Z_\infty/\|{\mb X}'_\infty \|) ~|~ \|{\mb X}'_\infty\|>1 \Big),
  $$
  where    $({\mb X}'_\infty, Z_\infty)\sim \Pi_\infty$.

  {\bf 2.}  Conversely, let $(\mb X, Y)$ be a random pair such that
  $\mathcal{L}( t^{-1}({\mb X},Y)~|~ \|{\mb X}\|>t )\to P_\infty $
  (Assumption~\ref{assum_2_huet}) and $|Y|$ is bounded by $M>0$
  (Assumption~\ref{assum_1_huet}). Notice that the continous mapping
  $\varphi$ 
  is in fact a homeomorphism from
  $\rset^d\setminus\{0\} \times \rset$ onto itself. Hence,  the
  inverse mapping
  $\varphi^{-1}: ({\mb x},y)\mapsto (\mb x, \|\mb x\|y)$ is also
  continuous on $\rset\setminus\{0\} \times \rset$. Thus, the
  continuous mapping theorem implies that
 $$
 \mathcal{L}\Big(\varphi^{-1} (t^{-1}{\mb X},Y) ~|~ \|{\mb X}\|>t \Big)\wto
 P_\infty\circ\varphi.  
 $$
 Now, $\varphi^{-1} (t^{-1}{\mb X},Y) = t^{-1}({\mb X},Z)$ where $Z =\|{\mb X}\| Y$, so that the above display is equivalent to the convergence~\eqref{eq:cv_XZ_given_X} in Lemma~\ref{lem:RV_oneCOmponent_conditioning}. Using statement $2$ from the latter lemma, 
 we obtain that $({\mb X},Z)$ is regularly varying in $\rset^{d+1}$. 
\qed

{\bf 3}. Notice first that regular variation of the density $\pi$ for
the pair $({\mb X},Z)$ implies regular variation in the classical
sense \citep{de1987regular}, \citep{cai2011estimation}; thus, the assumptions of
the statements imply those of the first statement, in particular a
limit distribution $\Pi_\infty$ for the pair $({\mb X},Z)$ exists. In
view of statement $1$, the pair $(\mb X, Y)$ satisfies
Assumption~\ref{assum_2_huet}. 

The expression $p(\mb x,y) = \|{\mb x}\|\pi({\mb x},\|{\mb x}\| y )$ is a simple change of variable formula. Now, in order to prove~\eqref{eq:reg_var_density_xy}, uniform convergence on $\sphere\times \rset$ is sufficient (see \eg  the proof of Proposition 2.2 in \cite{huet2023regression}).  Letting
$p_\infty({\mb x},y) = \|{\mb x}\| \pi_\infty({\mb x}, \|{\mb x}\| y)$ as in the statement,  we have
\begin{multline*}
   \sup_{\boldsymbol{\omega} \in \sphere, y\in\rset}\big| b(t) t^d \|t\boldsymbol{\omega}\| p(t\boldsymbol{\omega}, y) - p_\infty(\boldsymbol{\omega}, y) \big\| =  \sup_{\boldsymbol{\omega} \in \sphere, y\in\rset}
    \big| b(t) t^{d+1} \pi(t\boldsymbol{\omega}, t y) - \pi_\infty(\boldsymbol{\omega},y) \big| \\
\le    \sup_{\|({\mb x},z)\|>t}
    \big| b(t) t^{d+1} \pi(t{\mb x}, t z) - \pi_\infty({\mb x},z) \big| \xrightarrow[t\to\infty]{} 0, 
 \end{multline*}
which proves uniform convergence~\eqref{eq:reg_var_density_xy}. With the  additional assumption that $\pi_\infty$ is lower bounded, the conditions of applications of Proposition 2.1-(iii) in \cite{huet2023regression} are satisfied, so that Assumption~\ref{hyp:cont_reg_func} also holds true. 

\section{Proof of Proposition~\ref{prop:deviations_residual}}\label{sec:proofPropDeviationsResidualLasso}

 Recall  the $i${th} entry of the residual vector is ${\mb e}_{i} = b({\mb X}_{(i)}) + \varepsilon_{(i)}, i\le k$, and notice $|b({\mb X}_{(i)})|\le \bar b(\|{\mb X}_{(k)} \|)$. In addition, the $(i,j)${th} entry of $\mb W$ satisfies   $|W_{i,j}|\le 1$. 
  We thus decompose the error as
  \begin{align}
    \|\mb W^\top \mb e\|_\infty/k
    &= \max_{j\le d}  \frac{1}{k} \Big|
      \sum_{i\le k} W_{i,j} \big( b({\mb X}_{(i)}) + \sigma({\mb X}_{(i)})\varepsilon_{(i)}\big) \Big|\nonumber \\
    & \le  \max_{j\le d}  \frac{1}{k} \Big|
      \sum_{i\le k} W_{i,j} \sigma({\mb X}_{(i)})\varepsilon_{(i)} \Big| +
       \bar b(\|{\mb X}_{(k)} \|). 
      \label{eq:decompos-residual} 
  \end{align}
  To control the second term in the above decomposition, we  rely on Lemma~\ref{lem:deviationEmpQuantiles} below.  By construction, the function $\bar b$ is nonincreasing. We obtain that with probability $1-\delta/2$, 
  \begin{equation}
    \label{eq:bound_bterm_residual}
    \bar b(\|{\mb X}_{(k)}\|)\le \bar b(t_{n, \tilde k(\delta/2)}) , 
  \end{equation}
  where $t_{n,\kappa}$ denotes the $1-\kappa/n$ quantile of $\|{\mb X}\|$ and
  $$\tilde k(\delta) =  k\Big(1 + \sqrt{\frac{3\log(1/\delta)}{k}} +  \frac{3\log(1/\delta)}{k} \Big).$$

  We turn to the first term in the right-hand side of~\eqref{eq:decompos-residual}.  Fix $j\le d$. Our argument proceeds conditionally to  ${\mb X}_{1:n}:=({\mb X}_1,\ldots, {\mb X}_n)$. 
  Because the noise variables  $\varepsilon_i$'s are independent of the ${\mb X}_i$'s,  the permutation $(\cdot)$ of the index set $1,\ldots, n$,  corresponding to the ranks of the $\|{\mb X}_i\|'s$,  is also independent of the $\varepsilon_i$'s. The exchangeability  of the $\varepsilon_i$'s, and the fact that the $W_{ij}'s$ are a function of ${\mb X}_{1:n}$,  implies 
  $$
  \mathcal{L} \Big( W_{i,j}\sigma({\mb X}_{(i)})\varepsilon_{(i)} , i\le k ~~|~~{\mb X}_{1:n} \Big) =
  \mathcal{L}\Big( W_{i,j}\sigma({\mb X}_{(i)})\varepsilon_{i} , i\le k ~~|~~{\mb X}_{1:n} \Big).
  $$
  Thus,  letting $T_{i,j} = W_{i,j}\sigma({\mb X}_{(i)})\varepsilon_{(i)}$,   the random variables $(T_{i,j}, i\le k)$ are independent, conditionally to ${\mb X}_{1:n}$, where we used that the $\varepsilon_i$'s are also independent conditionally to ${\mb X}_{1:n}$.  
  
 Also $|T_{i,j}|\le M_\varepsilon$ almost surely,  and by independence,
 $$
 \EE(T_{i,j}\given {\mb X}_{1:n} ) = 
 W_{i,j}\sigma({\mb X}_{(i)}) \EE(\varepsilon_{(i)} \given {\mb X}_{1:n}) = 0.
 $$
 A direct
 application of McDiarmid's inequality (conditionally to ${\mb X}_{1:n}$)
   yields that for $t>0$, for fixed $j\le d$, almost surely,
  $$
  \PP\Big(\big| k^{-1}\sum_{i\le k} T_{i,j}\big| \ge t ~\big|~ X_{1:n} \Big) \le 2\exp\Big( \frac{-2 k t^2}{M_\varepsilon^2} \Big).
  $$
  Integrating the above display with respect to the law of the
  $W_{i,j}$'s and a union bound over $j\in\{1,\ldots, d\}$ immediately
  yields the tail bound
  $$
\PP\Big(\max_{j\le d}\big|  \frac{1}{k}\sum_{i\le k} W_{i,j}\varepsilon_{(i)}  \big| \ge t \Big) \le 2d\exp\Big( \frac{-2 k t^2}{M_\varepsilon^2} \Big). 
$$
An inversion of the above bound, combined with the decomposition~\eqref{eq:decompos-residual} concludes the proof. 
\qed

The following  lemma is used to control the bias term in the error decomposition~\eqref{eq:decompos-residual}. 
  \begin{lemma}[Deviation of empirical quantiles]\label{lem:deviationEmpQuantiles}
    Let $R$ be  continuous, real-valued random variable with distribution function $F$. Let $F^{-1}$ denote the generalized (left-continuous) inverse of $F$. Let $R_i, i \le n$ be an \iid sample according to $F$, and let $R_{(1)}\ge \cdots \ge R_{(n)}$ denote the associated (decreasingly ordered) order statistics. Then with
    $$\tilde k(\delta):= k\Big(1 + \sqrt{\frac{3\log(1/\delta)}{k}} +  \frac{3\log(1/\delta)}{k} \Big),$$
    with probability at least $1-\delta$, it holds that 
    $$
R_{(k)}\ge F^{{-1}}\Big( 1 - \frac{\tilde k(\delta)}{n} \Big). 
    $$
  \end{lemma}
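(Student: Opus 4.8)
The plan is to rephrase the event $\{R_{(k)}\ge F^{\leftarrow}(1-\tilde k(\delta)/n)\}$ in terms of a binomial count. Since $F$ is continuous, $U_i := F(R_i)$ are i.i.d.\ uniform on $[0,1]$, and the decreasingly ordered statistics satisfy $F(R_{(i)}) = U_{(i)}$ where $U_{(1)}\ge \dots \ge U_{(n)}$. By the standard quantile/order-statistic duality, for a threshold $q = 1 - \tilde k(\delta)/n$ one has $R_{(k)} \ge F^{\leftarrow}(q)$ if and only if $U_{(k)} \ge q$, i.e.\ if and only if $\#\{i \le n : U_i > q\} \ge k$ (the monotone relation $F^\leftarrow(q) \le r \iff q \le F(r)$ for continuous $F$ takes care of the boundary bookkeeping). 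The random variable $N := \#\{i : U_i > q\}$ is Binomial with parameters $n$ and $1-q = \tilde k(\delta)/n$, so $\mathbb{E}[N] = \tilde k(\delta)$. Hence the claim reduces to showing $\PP[N < k] \le \delta$, i.e.\ a lower-tail deviation bound for $N$ below its mean, since $k < \tilde k(\delta)$.

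The key step is then a multiplicative Chernoff (Bernstein-type) lower-tail bound for the binomial. Writing $m := \mathbb{E}[N] = \tilde k(\delta)$, the classical bound gives, for $0 < t < m$,
\[
\PP[N \le m - t] \le \exp\!\Big(-\frac{t^2}{2m}\Big).
\]
We want the right-hand side to be at most $\delta$ when $m - t = k$, i.e.\ $t = m - k = \tilde k(\delta) - k$. So it suffices to check that $(\tilde k(\delta) - k)^2 \ge 2\,\tilde k(\delta)\,\log(1/\delta)$. Set $a := \log(1/\delta)$ and recall $\tilde k(\delta) = k\big(1 + \sqrt{3a/k} + 3a/k\big) = k + \sqrt{3ak} + 3a$, so $\tilde k(\delta) - k = \sqrt{3ak} + 3a$. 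Then
\[
(\tilde k(\delta)-k)^2 = 3ak + 6a\sqrt{3ak} + 9a^2,
\]
while $2\,\tilde k(\delta)\,a = 2ak + 2a\sqrt{3ak} + 6a^2$. Subtracting, $(\tilde k(\delta)-k)^2 - 2\tilde k(\delta)a = ak + 4a\sqrt{3ak} + 3a^2 \ge 0$, which is manifestly nonnegative. This closes the argument.

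I expect the main obstacle to be purely bookkeeping rather than conceptual: getting the direction of the inequalities in the quantile/order-statistic equivalence exactly right (strict versus non-strict, and which tail of the binomial corresponds to $R_{(k)}$ being \emph{large}), and being slightly careful that the multiplicative Chernoff bound I invoke is stated in the form $\exp(-t^2/(2m))$ for the lower tail (some references state $\exp(-t^2/(2m))$ for the lower tail and $\exp(-t^2/(3m))$ for the upper tail; only the lower-tail version is needed here, and it is the cleaner one). Everything else is the elementary algebraic verification of $(\sqrt{3ak}+3a)^2 \ge 2(k+\sqrt{3ak}+3a)a$ carried out above, which is where the peculiar-looking form of $\tilde k(\delta)$ comes from — it is reverse-engineered precisely so this inequality holds.
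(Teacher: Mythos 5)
Your proof is correct and takes a genuinely different, more elementary route than the paper. The paper converts to uniform order statistics exactly as you do, but then cites a sub-gamma concentration bound for $1 - U_{(k)}$ taken from Reiss (\emph{Approximate Distributions of Order Statistics}, Lemma 3.1.1), namely a Bernstein-type inequality of the form
\[
\PP\!\left[\tfrac{\sqrt{n}}{\sigma}\bigl(1 - U_{(k)} - \tfrac{k}{n+1}\bigr) \ge t\right] \le \exp\!\Bigl(-\tfrac{t^2}{3\bigl(1 + t/(\sigma\sqrt{n})\bigr)}\Bigr), \qquad \sigma^2 = \tfrac{k}{n+1}\bigl(1-\tfrac{k}{n+1}\bigr) \le \tfrac{k}{n},
\]
and inverts it to read off $\tilde k(\delta)$. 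You instead reformulate $\{U_{(k)} \ge q\}$ as a lower-tail binomial event for $N = \#\{i : U_i > q\} \sim \mathrm{Bin}(n, \tilde k(\delta)/n)$ and apply the multiplicative Chernoff bound $\PP[N \le m - t] \le \exp(-t^2/(2m))$. Your algebra $(\tilde k - k)^2 - 2\tilde k\log(1/\delta) = ak + 4a\sqrt{3ak} + 3a^2 \ge 0$ is right, and in fact shows $\tilde k(\delta)$ has a fair amount of slack relative to the factor-$2$ Chernoff denominator (the factors of $3$ in $\tilde k$ come from Reiss's Bernstein constant, not from yours). What your route buys: self-containedness (only the textbook binomial Chernoff bound, no external reference). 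What the paper's route buys: it is the exact inversion of the Reiss inequality, so no slack is wasted, and the same Reiss lemma is convenient to re-cite across multiple results in that literature. One small bookkeeping point worth flagging explicitly if you write this up: you implicitly need $\tilde k(\delta) \le n$ for $1 - \tilde k(\delta)/n$ to be a valid quantile level (otherwise $F^\leftarrow$ is evaluated at a nonpositive argument and the claim is vacuous under the usual convention $F^\leftarrow(u) = -\infty$ for $u \le 0$); the paper leaves this implicit too, so it is not a gap, but it deserves a one-line remark.
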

  \begin{proof}
    Let $U_i = F(R_i)$. By our continuity assumption, the $U_i$'s form an independent sample of standard uniform random variable. It is known that the order statistics of such a sample a uniform random sample are sub-gamma; namely, as shown in \citet[Lemma 3.1.1.]{reiss2012approximate} we have that for $k\le n$,
\begin{equation}
  \label{eq:deviationOrderUni-reiss}
  \PP\bigg\{\frac{\sqrt{n}}{\sigma }  \Bigl(1- U_{(k)} - \frac{k}{n+1}\Bigr) \ge t \bigg\}\le
  \exp\bigg[ -\,\frac{t^2}{3\Big\{1 + t/( \sigma\sqrt{n} ) \Big\}}\bigg] ,
\end{equation}
with $\sigma^2 = \{1-k/(n+1) \} \{{k}/(n+1) \} \le k/n$. The above display follows immediately from the cited reference and the fact that $1-U_{(k)} \eqd U_{(n+1-k)}$). Hence,
\begin{align*}
  \PP( 1 - U_{(k)} - k/n>t ) & \le \PP( 1 - U_{(k)} - k/(n+1)>t ) \\
  &\le \exp \Big\{ -\,\frac{nt^2/\sigma^2}{3(1 + t/ \sigma^2  )}\Big\}\,.
\end{align*}
Inverting the above inequality  yields that with probability greater than $1-\delta$,
\begin{align*}
  1 - U_{(k)}& \le \frac{k}{n} + \sqrt{\frac{3 \sigma^2 \log(1/\delta)}{n} }+
               \frac{3\log(1/\delta)}{n} \\
          & = \frac{k}{n}\Big\{1 + \sqrt{\frac{3 \log(1/\delta)}{k} }+
            \frac{3\log(1/\delta)}{k}  \Big\}.      
\end{align*}
Because $F(x)\ge y \iff x\ge F^{-1}(y)$ for any $x\in\rset$ and $y\in(0,1)$, the above display yields the statement of the lemma. 
\end{proof}




\end{document}